\documentclass[12pt]{article}

\pdfoutput=1

\usepackage{amsmath, amssymb, amsthm, enumerate, fullpage, bm}
\usepackage{verbatim, enumitem, bbm, etoolbox}
\usepackage{latexsym}

\apptocmd{\lim}{\limits}{}{}

\theoremstyle{definition}
\newtheorem{thm}{Theorem}
\newtheorem{theorem}[thm]{Theorem}

\newtheorem{lemma}[thm]{Lemma}

\numberwithin{thm}{section}

\numberwithin{subcase}{case}

\theoremstyle{definition}
\newtheorem{definition}[thm]{Definition}

\def\forkindep{\mathrel{\raise0.2ex\hbox{\ooalign{\hidewidth$\vert$\hidewidth\cr\raise-0.9ex\hbox{$\smile$}}}}}


\def\Ind{\setbox0=\hbox{$x$}\kern\wd0\hbox to 0pt{\hss$\mid$\hss}
	\lower.9\ht0\hbox to 0pt{\hss$\smile$\hss}\kern\wd0}

\def\Notind{\setbox0=\hbox{$x$}\kern\wd0\hbox to 0pt{\mathchardef
		\nn=12854\hss$\nn$\kern1.4\wd0\hss}\hbox to 0pt{\hss$\mid$\hss}\lower.9\ht0
	\hbox to 0pt{\hss$\smile$\hss}\kern\wd0}

\def\phi{\varphi}

\def\<{\langle}
\def\>{\rangle}

\begin{document}	

	\bibliographystyle{plain}
	
	\author{Douglas Ulrich\!\!\
	\thanks{Partially supported
by Laskowski's NSF grant DMS-1308546.}\\
Department of Mathematics\\University of Maryland}
	\title{Low is a Dividing Line in Keisler's Order}
	\date{\today} 
	
	\maketitle
	
\begin{abstract}
We show in $ZFC$ that the class of low theories forms a dividing line in Keisler's order. That is, if $T$ is low and $T' \trianglelefteq T$ then $T'$ is low. We also show there is a minimal nonlow theory $T_{cas}$. 
\end{abstract}

\section{Introduction}

Keisler's order $\trianglelefteq$ is a partial order on countable complete theories, introduced by Keisler in \cite{Keisler}. He began with the observation that if $\mathcal{U}$ is a $\lambda$-regular ultrafilter on $\mathcal{P}(\lambda)$, if $T$ is a complete countable theory, and if $M \models T$, then whether or not $M^\lambda/\mathcal{U}$ is $\lambda^+$-saturated only depends on $T$. If this is the case we say that $\mathcal{U}$ $ \lambda^+$-\emph{saturates} $T$, and we set $T_1 \trianglelefteq T_2$ if whenever $\mathcal{U}$ is a regular ultrafilter on $\mathcal{P}(\lambda)$, if $\mathcal{U}$ $\lambda^+$-saturates $T_2$ then $\mathcal{U}$ $\lambda^+$-saturates $T_1$.  Recently, Malliaris and Shelah \cite{DividingLine} have introduced the method of \emph{Separation of Variables}, which allows us to translate the problem of Keisler's order to the construction of ultrafilters on arbitrary complete Boolean algebras, and have gone on to find many applications. In particular, in \cite{Optimals} they prove that if there is a supercompact cardinal, then simplicity is a dividing line in Keisler's order.

Say that the complete theory $T$ is \emph{low} if it is simple and for every formula $\phi(x, \overline{y})$, there is some $k$ such that for all $\overline{b}$, if $\phi(x, \overline{b})$ does not $k$-divide over the empty set then it does not divide over the empty set.\footnote{This is the standard definition of low, for instance it is equivalent to the original definition in \cite{Buechler}; Malliaris has defined low slightly differently in \cite{MalliarisFlex}, namely not requiring $T$ to be simple. } In this paper we are able to combine the argument in \cite{Optimals} with the partial definability of forking in low theories to get that the class of low theories is a dividing line in Keisler's order, using just $ZFC$.  By a \emph{dividing line} we mean a $\trianglelefteq$-downward closed set of complete first order theories; so in other words, if $T$ is low and $T' \trianglelefteq T$ then $T'$ is low.

In section~\ref{LowSection} we prove a useful lemma about low theories. In Section~\ref{SetupSection} we review the general setup of \cite{DividingLine}, in particular stating the Existence Theorem and  Separation of Variables. In Section~\ref{TypeAmalg} we introduce a simplified relative of $(\lambda, \mu, \theta, \sigma)$-explicitly simple from \cite{Optimals}, namely having $(\lambda, \mu, \sigma)$-type amalgamation. In Section~\ref{LowDividingLine} we show that the class of low theories forms a dividing line in Keisler's order. In Section~\ref{MinNonLowSec} we show there is a minimal nonlow theory $T_{cas}$ in Keisler's order.

\section{Low Theories}\label{LowSection}
Recall that by a theorem of Kim \cite{KimForking}, in any simple theory $T$, forking is the same as dividing; that is $\phi(x, \overline{a})$ forks over $A$ if and only if it divides over $A$. We thus use the terms \emph{forking} and \emph{dividing} interchangeably.

We give the following equivalence for $T$ being low. (C) is Buechler's original definition of lowness from \cite{Buechler}; equivalently it states that for every formula $\phi(x, \overline{y})$, $D(x=x, \phi(x, \overline{y})) < \omega$, where $D$ is the $D$-rank for low theories; in the same paper he proved the equivalence of that with our definition in terms of dividing. Thus (A) if and only if (C) is already known. 

\begin{theorem}\label{LowEquiv}
Suppose $T$ is simple. Then the following are equivalent:

\begin{itemize}
\item[(A)] $T$ is low.
\item[(B)] Suppose $\phi(x, \overline{b})$ does not fork over $A$. Then there is some $\overline{c} \in A$ and some $\psi(\overline{y}, \overline{z}) \in \mbox{tp}(\overline{b}, \overline{c})$ such that whenever $(\overline{b}', \overline{c}') \models \psi(\overline{y}, \overline{z})$, then $\phi(x, \overline{b}')$ does not fork over $\overline{c}'$.
\item[(C)] For every formula $\phi(x, \overline{y})$, there is some $k$ such that there is no sequence $(\overline{b}_i: i < k)$ such that $\bigwedge_{i < k} \phi(x, \overline{b}_i)$ is consistent, and such that for each $i < k$, $\phi(x, \overline{b}_i)$ forks over $\{\overline{b}_j: j < i\}$. 
\end{itemize}
\end{theorem}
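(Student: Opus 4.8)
The equivalence (A) $\Leftrightarrow$ (C) is Buechler's (\cite{Buechler}), so the task is to connect (B) to these; I would prove (C) $\Rightarrow$ (B) and (B) $\Rightarrow$ (A), with (C) $\Rightarrow$ (B) the substantive direction (this is the ``partial definability of forking'' alluded to in the introduction). Fix $\phi(x,\overline{y})$, let $k=k_\phi$ be as in (C), and let $D(-,\phi,k)$ be the associated local $D$-rank, which is finite by lowness. A first reduction, by applying (C) to $\phi$ read as a formula with extra dummy variables together with compactness, is that the same $k$ works uniformly over finite bases: for every finite $\overline{c}$ and every $\overline{b}$, $\phi(x,\overline{b})$ forks over $\overline{c}$ iff there is a $\overline{c}$-indiscernible sequence $\langle\overline{b}_i:i<\omega\rangle$ with $\overline{b}_0=\overline{b}$ along which $\{\phi(x,\overline{b}_i)\}$ is $k$-inconsistent. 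Combined with Kim's Lemma this gives a finitary test: for any set $B$, $\phi(x,\overline{b})$ does not fork over $B$ iff, for a Morley sequence $\langle\overline{b}_i:i<k\rangle$ over $B$ in $\tp(\overline{b}/B)$, we have $\models\exists x\,\bigwedge_{i<k}\phi(x,\overline{b}_i)$ --- one direction because a Morley sequence over $B$ keeps $\{\phi(x,\overline{b}_i)\}$ consistent by Kim's Lemma, the other because joint consistency of the first $k$ terms of a $B$-indiscernible Morley sequence means it is not $k$-inconsistent, hence by lowness consistent, so $\phi(x,\overline{b})$ does not divide over $B$.

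Now suppose $\phi(x,\overline{b})$ does not fork over $A$, and fix a Morley sequence $\langle\overline{b}_i:i<\omega\rangle$ over $A$ in $\tp(\overline{b}/A)$ with $\overline{b}_0=\overline{b}$, so $\models\exists x\,\bigwedge_{i<k}\phi(x,\overline{b}_i)$. The heart of the argument --- and the step I expect to be the main obstacle --- is to replace the infinite parameter set $A$ by a finite tuple $\overline{c}\in A$ and to compress the relevant properties of $\langle\overline{b}_0,\dots,\overline{b}_{k-1}\rangle$ over $\overline{c}$ into a single formula. This is where finiteness of $D(-,\phi,k)$ enters: since this rank is bounded and has finite character, only finitely many elements of $A$ are needed to ``witness'' the independence inside the Morley sequence, so one can choose a finite $\overline{c}\in A$ over which $\langle\overline{b}_i\rangle$ already behaves, as far as $\phi$ is concerned, like a Morley sequence; and because the rank takes only finitely many values, the assertion that a $k$-tuple $\langle\overline{y}_0,\dots,\overline{y}_{k-1}\rangle$ forms (an initial segment of) such a sequence over a varying base $\overline{z}$ --- including that each $\overline{y}_i$ realizes $\tp(\overline{y}_0/\overline{z})$ --- can be written as a single first-order formula $\chi(\overline{y}_0,\dots,\overline{y}_{k-1},\overline{z})$, e.g.\ using at each stage a finite disjunction over the possible rank values of the definable conditions ``$D(\tp(\overline{y}_{i+1}/\overline{z}),\phi,k)=d_0$'' and ``$D(\tp(\overline{y}_{i+1}/\overline{z}\,\overline{y}_{\le i}),\phi,k)\ge d_0$'', together with $\Delta$-$k$-indiscernibility over $\overline{z}$ for a suitable finite $\Delta$. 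Granting that this definable surrogate is still strong enough to rerun the finitary test over the $\overline{z}$-part, one sets
\[
\psi(\overline{y},\overline{z})\;:=\;\exists\,\overline{y}_1\cdots\overline{y}_{k-1}\Big(\chi(\overline{y},\overline{y}_1,\dots,\overline{y}_{k-1},\overline{z})\;\wedge\;\exists x\,\textstyle\bigwedge_{i<k}\phi(x,\overline{y}_i)\Big);
\]
then $\psi\in\tp(\overline{b},\overline{c})$, and any realization $(\overline{b}',\overline{c}')$ of $\psi$ supplies, over $\overline{c}'$, a configuration certifying that $\phi(x,\overline{b}')$ does not fork over $\overline{c}'$. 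The delicate points --- identifying exactly which finite $\overline{c}$ and finite $\Delta$ suffice, and verifying that passing to the definable approximation $\chi$ loses nothing essential --- are the technical core of the lemma.

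For (B) $\Rightarrow$ (A), fix $\phi$. Applying (B) with $A=\emptyset$ (so $\overline{c}$ is empty), every $\overline{b}$ with $\phi(x,\overline{b})$ nonforking over $\emptyset$ lies in $\psi_{\overline{b}}(\mathcal{M})$ for some $\psi_{\overline{b}}\in\tp(\overline{b})$ all of whose realizations give nonforking instances; hence $\{\overline{b}:\phi(x,\overline{b})\text{ forks over }\emptyset\}$ is type-definable. Since forking equals dividing in a simple theory (\cite{KimForking}), this locus is the increasing union over $k$ of the loci $\{\overline{b}:\phi(x,\overline{b})\ k\text{-divides over }\emptyset\}$; a compactness argument --- using that these pieces are suitably definable in a low theory and that the forking locus is type-definable hence ``compact'' --- then forces the union to be attained at a finite level $k$, which is precisely (A). Equivalently one argues the contrapositive: a sequence of instances witnessing failure of (A) at every level would, by saturation, converge to an instance forking but not dividing over $\emptyset$, against Kim's theorem. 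This direction is much shorter than (C) $\Rightarrow$ (B).
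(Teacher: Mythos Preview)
Your (B) $\Rightarrow$ (A) has a genuine gap. From (B) you correctly deduce that the forking locus $F=\{\overline{b}:\phi(x,\overline{b})\text{ forks over }\emptyset\}$ is type-definable, and you write $F=\bigcup_k D_k$ with $D_k$ the $k$-dividing locus. But each $D_k$ is itself type-definable (not definable), and a closed subset of the Stone space being a countable increasing union of closed subsets does \emph{not} force the union to stabilize --- think of a convergent sequence together with its limit point. Your parenthetical ``suitably definable in a low theory'' is circular, since lowness is precisely what you are trying to prove; and the ``convergence'' alternative fails too, because any accumulation point of your $\overline{b}_k$'s in $F$ will $k_0$-divide for \emph{some} $k_0$, which contradicts nothing. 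The paper sidesteps this by proving (B) $\Rightarrow$ (C) instead: (B) makes ``$\phi(x,\overline{y}_\alpha)$ forks over $\{\overline{y}_\beta:\beta<\alpha\}$'' expressible by a partial type, so one writes down a type $\Gamma$ in variables $(\overline{y}_\alpha:\alpha<\omega_1)$ asserting joint consistency of all finite conjunctions of the $\phi(x,\overline{y}_\alpha)$ together with this forking chain. A realization would give a consistent type forking over every countable subset of its domain, contradicting simplicity; so $\Gamma$ is inconsistent, and compactness on a finite fragment yields (C).

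Your (C) $\Rightarrow$ (B) is not wrong, but it takes an unnecessary detour through Morley sequences and $D$-rank whose ``delicate points'' you never resolve. Your own ``first reduction'' --- that the $k$ from (C) makes forking over any finite $\overline{c}$ coincide with $k$-dividing over $\overline{c}$ --- is already the whole point, and is exactly what the paper uses for (A) $\Rightarrow$ (B). From there one finishes immediately: since $\phi(x,\overline{b})$ does not $k$-divide over $A$, by compactness it does not $k$-divide over some finite $\overline{c}\in A$; and $k$-dividing over a finite tuple is type-definable in $(\overline{b},\overline{c})$ by the standard Ramsey/compactness approximation, so its complement is open and a single $\psi\in\tp(\overline{b},\overline{c})$ already witnesses non-$k$-dividing, hence non-forking, over $\overline{c}'$ for any realization. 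Your approach instead needs a formula $\chi$ certifying that $(\overline{y}_0,\dots,\overline{y}_{k-1})$ begins a Morley sequence over $\overline{z}$; but Morley-ness requires non-forking for \emph{all} formulas, not just $\phi$, and you have not shown that a $D(-,\phi,k)$-rank surrogate is strong enough for Kim's lemma to apply --- whereas the direct compactness route avoids the issue entirely.
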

\begin{proof}

(A) implies (B): Choose $k$ such that if $\phi(x, \overline{b}')$ does not $k$-divide over $\emptyset$ then it does not divide over $\emptyset$. It follows that if $A'$ is any set and $\phi(x, \overline{b}')$ does not $k$-divide over $A'$ then $\phi(x, \overline{b}')$ does not divide over $A'$. Since $\phi(x, \overline{b})$ does not divide over $A$, $\phi(x, \overline{b})$ does not $k$-divide over $A$; by a compactness argument we can choose $\overline{c} \in A$ and $\psi(\overline{y}, \overline{z}) \in \mbox{tp}(\overline{b}, \overline{c})$ such that whenever $\models \psi(\overline{b}', \overline{c}')$ then $\phi(x, \overline{b}')$ does not $k$-divide over $\overline{c}'$. But then by choice of $k$, $\phi(x, \overline{b}')$ does not divide over $\overline{c}'$.

(B) implies (C): Suppose (C) holds, and let $\phi(x, \overline{y})$ be given. Let $\Gamma$ be the partial type in the variables $(\overline{y}_\alpha: \alpha < \omega_1)$ asserting: 
\begin{itemize}
\item For each $s \in [\omega_1]^{<\omega}$, $\exists x \bigwedge_{\alpha \in s} \phi(x, \overline{y}_\alpha)$; 
\item For each $\alpha < \omega_1$, $\phi(x, \overline{y}_\alpha)$ forks over $(\overline{y}_{\beta}: \beta < \alpha)$. 
\end{itemize} 
The second item is possible to express by hypothesis.

Suppose towards a contradiction that $\Gamma$ were consistent; choose $(\overline{b}_\alpha: \alpha < \omega_1)$ a realization of $\Gamma$. Let $p(x)$ be the type over $(\overline{b}_\alpha: \alpha < \omega_1)$ asserting that $\phi(x, \overline{b}_\alpha)$ holds for each $\alpha < \omega_1$. Then $p(x)$ is consistent but forks over every countable subset of its domain, contradicting simplicity of $T$. 

Thus $\Gamma$ is inconsistent; by symmetry we can choose $n$ such that $\Gamma \restriction_{(\overline{y}_i: i < n)}$ is inconsistent. This just says that (C) holds.

(C) implies (A): let $\phi(x, \overline{y})$ be given, and let $k$ be as in (C). We claim that if $\phi(x, \overline{b})$ does not $k+1$-divide over $\emptyset$ then $\phi(x, \overline{b})$ does not divide over $\emptyset$. Indeed, suppose towards a contradiction that we had an indiscernible sequence $(\overline{b}_i: i < \omega)$ such that $\phi(x, \overline{b}_i)$ were $k$-consistent but $k+1$-inconsistent. Then $(\overline{b}_i: i < k)$ is a counterexample to the choice of $k$.

\end{proof}

\section{Existence Theorem and Separation of Variables}\label{SetupSection}

We recall the general setup introduced in \cite{DividingLine}, \cite{InfManyClass} and \cite{Optimals}. Given $\mathcal{B}$ a complete Boolean algebra and an implicit cardinal $\lambda$, a \emph{distribution} on $\mathcal{B}$ is a sequence $(\mathbf{a}_s: s \in [\lambda]^{<\aleph_0})$ of nonzero elements of $\mathcal{B}$, such that $\mathbf{a}_\emptyset = 1$, and for $s \subseteq t$, $\mathbf{a}_s \geq \mathbf{a}_t$. When context makes $\lambda$ clear, we just write $(\mathbf{a}_s)$. The distribution $(\mathbf{a}_s: s \in [\lambda]^{<\aleph_0})$ is \emph{multiplicative} if for each $s \in [\lambda]^{<\aleph_0}$, $\mathbf{a}_s = \bigwedge_{\alpha \in s} \mathbf{a}_{\{\alpha\}}$. $(\mathbf{a}_s)$ \emph{refines} $(\mathbf{b}_s)$ if each $\mathbf{a}_s \leq \mathbf{b}_s$. If $\mathcal{D}$ is a filter on $\mathcal{B}$, then $(\mathbf{a}_s)$ is \emph{in} $\mathcal{D}$ if each $\mathbf{a}_s \in \mathcal{D}$. 

For example, suppose $\mathcal{U}$ is an ultrafilter on $\mathcal{P}(I)$ for some index set $I$, and $M$ is a model with ultrapower $\overline{M} := M^I/\mathcal{U}$, and $p(x) = \{\phi_\alpha(x, \overline{a}_\alpha): \alpha < \lambda\}$ is a partial type over $\overline{M}$. Then we can form the distribution $(\mathbf{a}_s: s \in [\lambda]^{<\aleph_0})$, where we let $\mathbf{a}_s := \{i \in I: M[i] \models \exists x \bigwedge_{\alpha \in s} \phi_\alpha(x, \overline{a}_\alpha[i])\}$ (where $M[i]$ is the $i$'th copy of $M$, and where we are fixing some lifting $i \mapsto \overline{a}_\alpha[i]$ of each $\overline{a}_\alpha$). Then $(\mathbf{a}_s)$ is a distribution in $\mathcal{U}$. The following is a standard fact, see for instance \cite{ConsUltrPOV} Observation 1.8: if $\mathcal{U}$ is $\lambda$-regular, then $p(x)$ is realized in $\overline{M}$ if and only if $(\mathbf{a}_s)$ has a multiplicative refinement in $\mathcal{U}$.

Also, given $\mu \geq \theta$ with $\theta$ regular and $\mu = \mu^{<\theta}$, and given a set $X$, let $P_{X, \mu, \theta}$ be the set of all partial functions from $X$ to $\mu$ of cardinality less than $\theta$, ordered by reverse inclusion; let $\mathcal{B}_{X, \mu, \theta}$ be the Boolean algebra completion of $P_{X, \mu, \theta}$. So $\mathcal{B}_{X, \mu, \theta}$ has the $\mu^+$-c.c. and is $<\theta$-distributive. For each $f \in P_{X, \mu, \theta}$ let $\mathbf{x}_f$ be the corresponding element of $\mathcal{B}_{X, \mu, \theta}$.

Let $\mathcal{B}$ be a complete Boolean algebra, let $T$ be a complete countable theory, let $\lambda$ be a cardinal and let $\mathcal{U}$ be an ultrafilter on $\mathcal{B}$. Then $\mathcal{U}$ is $(\lambda, \mathcal{B}, T)$-\emph{moral} if whenever $(\mathbf{a}_s: s \in [\lambda]^{<\aleph_0})$ is a certain kind of distribution from $\mathcal{U}$ (namely, a $(\mathcal{B}, T, \overline{\phi})$-possibility for some sequence of formulas $\overline{\phi}$) then $(\mathbf{a}_s)$ has a multiplicative refinement in $\mathcal{U}$ (for more details see section 6 of \cite{DividingLine}; we won't need to work with the definition directly.) This notion is actually a generalization of $\lambda^+$-saturation. That is, suppose $\mathcal{B} = \mathcal{P}(\lambda)$ and $\mathcal{U}$ is $\lambda$-regular. Then $\mathcal{U}$ $\lambda^+$-saturates $T$ if and only if $\mathcal{U}$ is $(\lambda, \mathcal{B}, T)$-moral.

We now quote some key facts from \cite{DividingLine}. (By Theorem 5.2 of \cite{DividingLine}, excellence is the same as goodness, so we phrase everything in terms of goodness.) The first fact is the \emph{Existence Theorem}, which is Theorem 8.1 of \cite{DividingLine}:

\begin{theorem}\label{ExistenceTheorem}
Suppose $\mu$ is a cardinal; write $\lambda = \mu^+$. Suppose $\mathcal{B}$ is a complete Boolean algebra with the $\mu^+$-c.c. Then there exists a regular good filter $\mathcal{D}$ on $\mathcal{P}(\lambda)$ such that $\mathcal{P}(\lambda)/\mathcal{D} \cong \mathcal{B}$.
\end{theorem}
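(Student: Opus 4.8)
The plan is to reformulate the conclusion and then run a Kunen-style transfinite construction. An isomorphism $\mathcal{P}(\lambda)/\mathcal{D} \cong \mathcal{B}$ is exactly a surjective Boolean homomorphism $\pi \colon \mathcal{P}(\lambda) \to \mathcal{B}$ with $\mathcal{D} = \{X \subseteq \lambda : \pi(X) = 1_{\mathcal{B}}\}$: the dual ideal of such a $\mathcal{D}$ is $\ker\pi$, and $\pi$ induces the isomorphism. So it suffices to build such a $\pi$ for which the filter $\mathcal{D} := \pi^{-1}(1_{\mathcal{B}})$ is regular and $\lambda^+$-good (every monotone $g\colon [\lambda]^{<\aleph_0} \to \mathcal{D}$ has a multiplicative refinement inside $\mathcal{D}$). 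I would construct $\pi$ --- equivalently a ``$\mathcal{B}$-coloring'' $X \mapsto \pi(X)$ of $\mathcal{P}(\lambda)$ --- by transfinite recursion of length $2^\lambda$, imitating Kunen's construction of a regular good ultrafilter but with the two-element target replaced by $\mathcal{B}$.

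The engine is an independent family of functions. Fix in $\ZFC$ a family $\langle f_\eta : \eta < 2^\lambda\rangle$ of functions $\lambda \to \lambda$ that is independent, meaning that for pairwise distinct $\eta_1, \dots, \eta_n$ and arbitrary $\varepsilon_1, \dots, \varepsilon_n < \lambda$ the set $\{i < \lambda : f_{\eta_1}(i) = \varepsilon_1, \dots, f_{\eta_n}(i) = \varepsilon_n\}$ has cardinality $\lambda$ (such a family exists in $\ZFC$). Reserve one block of these functions to build a regularizing family into the initial filter, so that every filter extending it is automatically regular. Along the recursion maintain a partial Boolean homomorphism $\pi_\xi$ defined on a subalgebra $\mathcal{A}_\xi \leq \mathcal{P}(\lambda)$ of size $< 2^\lambda$, with $\mathcal{D}_\xi := \pi_\xi^{-1}(1_{\mathcal{B}})$, together with the invariant that all but fewer than $2^\lambda$ of the non-reserved functions remain ``independent over $(\mathcal{A}_\xi, \pi_\xi)$'', i.e.\ the $\mathcal{B}$-values already committed impose no constraint on the sets $f_\eta^{-1}(\{\varepsilon\})$. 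Then schedule three kinds of step, each consuming one fresh still-independent function $f_\eta$: given a new $X \subseteq \lambda$, extend $\pi_\xi$ by assigning $X$ a value in $\mathcal{B}$ consistent with what is committed (absorbing the choice into $f_\eta$); given a new monotone $g\colon [\lambda]^{<\aleph_0} \to \mathcal{D}_\xi$, use $f_\eta$ to code a multiplicative refinement of $g$ lying in $\mathcal{D}_\xi$ (this is Kunen's step, and is what forces goodness); given a prescribed $b \in \mathcal{B}$, use $f_\eta$ and a partition of $\lambda$ coding $b$ to place into $\mathcal{A}_\xi$ a set with value $b$ (this forces surjectivity). Since $\mathcal{P}(\lambda)$, the monotone maps $[\lambda]^{<\aleph_0} \to \mathcal{P}(\lambda)$, and (via the $\mu^+$-c.c.) the elements of $\mathcal{B}$ each number at most $2^\lambda$, a bookkeeping of length $2^\lambda$ meets every scheduled task, and $\pi := \bigcup_{\xi} \pi_\xi$, $\mathcal{D} := \bigcup_\xi \mathcal{D}_\xi$ are as required.

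I expect the main obstacle to be reconciling the two jobs asked of the independent functions. In Kunen's argument the only commitments are membership or non-membership in an ultrafilter, whereas here each step also pins down a value in $\mathcal{B}$; one must verify that after all such commitments all but fewer than $2^\lambda$ of the $f_\eta$ are still genuinely independent --- in particular that forcing $b \in \operatorname{im}\pi$ does not covertly decide the value of some $f_\eta^{-1}(\{\varepsilon\})$ needed for a later goodness step. This is exactly where $\lambda = \mu^+$ and the $\mu^+$-c.c.\ of $\mathcal{B}$ enter: the ``width'' of any single commitment stays below $\lambda$ (an element of $\mathcal{B}$ is a join of at most $\mu$ pieces), so the constraints never exhaust the independence of functions that live on $\lambda$. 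A secondary thing to check is that the limit $\pi$ is genuinely defined on all of $\mathcal{P}(\lambda)$ and is an honest Boolean homomorphism --- this is why the recursion must enumerate every subset of $\lambda$ and hence have length exactly $2^\lambda$, and why the approximations $\pi_\xi$ must be kept as homomorphisms on subalgebras throughout rather than mere partial functions.
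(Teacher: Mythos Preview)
The paper does not prove this statement: it is quoted without proof as Theorem~8.1 of Malliaris--Shelah \cite{DividingLine}, so there is no in-paper argument to compare against. Your outline is the correct one and matches the standard proof: a Kunen-style construction over an independent family of functions of length $2^\lambda$, with bookkeeping steps that (i) extend the partial homomorphism to new subsets, (ii) supply multiplicative refinements for monotone maps (goodness), and (iii) hit prescribed elements of $\mathcal{B}$ (surjectivity), while regularity is built in at the start.

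One point deserves sharpening. You locate the use of the $\mu^+$-c.c.\ in bounding the ``width'' of a single commitment so as not to spoil independence; that is part of it, but the chain condition is also what guarantees $|\mathcal{B}| \le 2^\lambda$, hence that the surjectivity tasks can be enumerated in length $2^\lambda$ at all. (The paper remarks immediately after the statement that with a little cardinal arithmetic the hypothesis can be weakened to $|\mathcal{B}| \le 2^\lambda$.) In the actual construction the independent functions are typically taken with range $\mu$ rather than $\lambda$, and the $\mu^+$-c.c.\ is used to realize an arbitrary $b \in \mathcal{B}$ as a join of at most $\mu$ pieces, each of which can be coded by a single fiber $f_\eta^{-1}(\{\varepsilon\})$; this is the precise mechanism by which a fresh function absorbs a surjectivity step without disturbing the remaining independence. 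Your sketch is otherwise sound.
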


Actually, with a little cardinal arithmetic one can show this holds for any complete Boolean algebra $\mathcal{B}$ with $|\mathcal{B}| \leq 2^\lambda$.

The other fact we need from \cite{DividingLine} is the following Theorem 6.13, termed \emph{Separation of Variables}.

\begin{theorem}\label{SeparationOfVariablesTheorem}
Suppose $\mathcal{B}$ is a complete Boolean algebra, and $\mathcal{D}$ is a regular good filter on $\mathcal{P}(\kappa)$ for some $\kappa$, and $\mathbf{j}: \mathcal{P}(\kappa)/\mathcal{D} \cong \mathcal{B}$ is a Boolean algebra isomorphism. Suppose $\mathcal{U}$ is any ultrafilter on $\mathcal{B}$; write $\mathcal{U}_* := \mathbf{j}^{-1}(\mathcal{U})$. Then for any complete theory $T$, $\mathcal{U}$ is $(\lambda, \mathcal{B}, T)$-moral if and only if $\mathcal{U}_*$ $\lambda^+$-saturates $T$.
\end{theorem}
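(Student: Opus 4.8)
The plan is to reduce both sides of the equivalence to the existence of multiplicative refinements of distributions, and then to transport such refinements across the isomorphism $\mathbf{j}$. Since $\mathcal{D}$ is regular, the associated ultrafilter $\mathcal{U}_*$ on $\mathcal{P}(\kappa)$ is regular, so the standard fact recalled above applies: for $M \models T$ and a partial type $p(x) = \{\phi_\alpha(x, \overline{c}_\alpha) : \alpha < \lambda\}$ over $\overline{M} := M^\kappa/\mathcal{U}_*$, the type $p$ is realized in $\overline{M}$ if and only if the distribution it induces on $\mathcal{P}(\kappa)$, namely $\widehat{\mathbf{a}}_s := \{i < \kappa : M \models \exists x \bigwedge_{\alpha \in s}\phi_\alpha(x, \overline{c}_\alpha[i])\}$, has a multiplicative refinement in $\mathcal{U}_*$. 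On the other side, by definition $\mathcal{U}$ is $(\lambda, \mathcal{B}, T)$-moral if and only if every $(\mathcal{B}, T, \overline{\phi})$-possibility in $\mathcal{U}$ has a multiplicative refinement in $\mathcal{U}$. Thus the theorem reduces to matching these two families of distributions under the passage between $\mathcal{P}(\kappa)/\mathcal{D}$ and $\mathcal{B}$.

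The transport works as follows. Pushing $\mathcal{D}$-classes forward by $\mathbf{j}$ sends a distribution $(\widehat{\mathbf{a}}_s)$ in $\mathcal{U}_*$ on $\mathcal{P}(\kappa)$ to the distribution $\bigl(\mathbf{j}([\widehat{\mathbf{a}}_s])\bigr)$ in $\mathcal{U}$ on $\mathcal{B}$; conversely, one lifts $\mathbf{j}^{-1}(\mathbf{a}_s)$ to a monotonic representative in $\mathcal{U}_*$ on $\mathcal{P}(\kappa)$. Since finite meets are preserved by the quotient map and by $\mathbf{j}$, forward transport preserves multiplicativity. For the backward transport one must lift a \emph{multiplicative} distribution on the quotient to a genuinely multiplicative one on $\mathcal{P}(\kappa)$, and this is exactly where goodness of $\mathcal{D}$ (equivalently excellence, by Theorem 5.2 of \cite{DividingLine}) enters: goodness says a monotonic map $[\kappa]^{<\aleph_0} \to \mathcal{D}$ admits a multiplicative refinement in $\mathcal{D}$, which one applies below a chosen monotonic lift, obtaining a multiplicative distribution that still refines the original data and still lies in $\mathcal{U}_*$.

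The core of the argument is to show that, under this correspondence, the distributions induced by partial types over $\overline{M}$ coincide --- up to refinement inside $\mathcal{U}$ and $\mathcal{U}_*$ --- with the $(\mathcal{B}, T, \overline{\phi})$-possibilities in $\mathcal{U}$. One direction is comparatively direct: if $(\widehat{\mathbf{a}}_s)$ is induced by a partial type over $\overline{M}$, its forward image on $\mathcal{B}$ carries, coordinate by coordinate, exactly the Boolean-valued finite-consistency pattern that defines a possibility. The other direction is the genuinely new content, and the reason the method is called Separation of Variables: from an abstract possibility $(\mathbf{a}_s)$ in $\mathcal{U}$ one must \emph{construct} a model $M \models T$ and parameters $\overline{c}_\alpha \in \overline{M}$ so that $\{\phi_\alpha(x, \overline{c}_\alpha) : \alpha < \lambda\}$ induces a monotonic lift of $(\mathbf{a}_s)$; the defining conditions on a possibility are precisely what lets one, on each coordinate $i < \kappa$, pick a copy of a model of $T$ and parameters witnessing the prescribed finite-consistency data, and then assemble these coordinatewise choices --- using regularity and goodness of $\mathcal{D}$ --- into a single partial type over $\overline{M}$.

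With this dictionary in hand both implications follow by symmetric bookkeeping. If $\mathcal{U}$ is moral, a partial type over $\overline{M}$ induces a distribution whose forward image is (refinable in $\mathcal{U}$ to) a possibility; morality supplies a multiplicative refinement in $\mathcal{U}$; goodness lifts it to a multiplicative refinement in $\mathcal{U}_*$ below the original data; the standard fact then realizes the type, so $\mathcal{U}_*$ $\lambda^+$-saturates $T$. Conversely, if $\mathcal{U}_*$ $\lambda^+$-saturates $T$, a possibility in $\mathcal{U}$ is realized as a lift of the distribution of some type over $\overline{M}$; that type is realized; the resulting multiplicative refinement in $\mathcal{U}_*$ pushes forward to one in $\mathcal{U}$; hence $\mathcal{U}$ is moral. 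I expect the main obstacle to be exactly the construction in the third paragraph --- separating the purely combinatorial datum on $\mathcal{B}$ from the model-theoretic datum $(\overline{\phi}, T)$, and verifying that nothing is lost when this datum is lifted through $\mathbf{j}$ and through goodness of $\mathcal{D}$; the distribution manipulations and the two concluding implications are then routine.
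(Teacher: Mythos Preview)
The paper does not prove this statement at all: it is quoted verbatim as Theorem~6.13 of \cite{DividingLine} (``Separation of Variables''), with no argument given beyond the citation. So there is nothing in the paper to compare your proposal against; your sketch is addressing a result the author treats as a black box.

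That said, your outline is a fair summary of the strategy Malliaris and Shelah use in \cite{DividingLine}: reduce both sides to the existence of multiplicative refinements, use $\mathbf{j}$ to transport distributions between $\mathcal{P}(\kappa)/\mathcal{D}$ and $\mathcal{B}$, use goodness of $\mathcal{D}$ to lift multiplicative refinements from the quotient back to $\mathcal{P}(\kappa)$, and identify the image of type-induced distributions with $(\mathcal{B},T,\overline{\phi})$-possibilities. You are right that the nontrivial step is the one you flag in your third paragraph---constructing, from an abstract possibility on $\mathcal{B}$, a model $M$ and parameters in $M^\kappa/\mathcal{U}_*$ whose \L{}o\'s distribution lifts it---and you are right that this is where regularity and goodness do real work. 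But as written your proposal is a sketch, not a proof: you never unpack the definition of a $(\mathcal{B},T,\overline{\phi})$-possibility, and the sentence ``the defining conditions on a possibility are precisely what lets one \ldots'' is the entire content of the hard direction. If you intend this as a pointer to the literature that is fine; if you intend it as a self-contained proof, the construction in that paragraph needs to be carried out.
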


\section{$(\lambda, \mu, \sigma)$-type amalgamation}\label{TypeAmalg}
In \cite{Optimals}, a property of theories called $(\lambda,\mu, \theta, \sigma)$-explicit simplicity is defined. In this section we present a streamlined version, namely $(\lambda, \mu, \sigma)$-type amalgamation. For the most part we have just made notational simplifications, such as identifying $\theta = \sigma$, and getting rid of the ambient model $M$ and type $p(x)$ over $M$. There are two changes we have made not of this nature. First, in the definition of good instantiation, we have required a stronger independence property; this is to arrange that Lemma~\ref{InstantiationsAreInd} is true. Second, we allow our colorings $G$ to take values in the partial order $P_{\lambda, \mu, \sigma}$ rather than the set $\mu$.

\begin{definition}
Suppose $X$ is a set and $\kappa$ is a cardinal. $\mbox{cl}: \mathcal{P}(X) \to \mathcal{P}(X)$ is a $\kappa$-\emph{closure-relation} if there are $\gamma_* < \kappa$ and functions $\overline{F} = (F_\gamma: \gamma < \gamma_*)$ such that each $F_\gamma: X^{<\omega} \to X$, and such that for every $A \subseteq X$, $\mbox{cl}(A)$ is the least subset of $X$ closed under $\overline{F}$. We write $\mbox{cl} = \mbox{cl}_{\overline{F}}$.

If $\mbox{cl}_0, \mbox{cl}_1$ are $\kappa$-closure relations on $X$, then say that $\mbox{cl}_1$ \emph{expands} $\mbox{cl}_0$ if $\mbox{cl}_0(A) \subseteq \mbox{cl}_1(A)$ for all $A \subseteq X$. 
\end{definition}

Choose distinct variables $x_\alpha$ for every ordinal $\alpha$. If $w$ is a set of ordinals let $\overline{x}_w = (x_\alpha: \alpha \in w)$ be the sequence of variables indexed by $w$ in increasing order.

\begin{definition}
Let $T$ be a countable simple theory.  Suppose $\aleph_0 < \sigma \leq \lambda$, with $\sigma$ regular. Suppose $\mbox{cl}$ is a $\sigma$-closure relation on $\lambda$. Then let $\mathcal{R}_{\lambda, \sigma, T}(\mbox{cl})$, or $\mathcal{R}(\mbox{cl})$ when $\lambda, \sigma, T$ are understood, be the set of all triple $(w, q, p)$ where:

\begin{itemize}
\item $w \in [\lambda]^{<\sigma}$ is $\mbox{cl}$-closed;
\item $q(\overline{x}_w)$ is a complete type in the variables $\overline{x}_w$;
\item $p(x, \overline{x}_w)$ is a complete type in the variables $(x, \overline{x}_w)$ extending $q(\overline{x}_w)$;
\item Suppose $\overline{b} = (b_\alpha: \alpha \in w)$ is a realization of $q(\overline{x}_w)$ in $\mathfrak{C}_T$, and $p(x, \overline{b})$ is the corresponding type over $\overline{b}$. Then:
\begin{itemize}
\item For each closed $w' \subseteq w$, $\overline{b}_{w'} \preceq \mathfrak{C}_T$ (that is $\overline{b}_{w'}$ enumerates an elementary substructure of $\mathfrak{C}_T$)

\item $p(x, \overline{b})$ does not fork over $\overline{b} \restriction_{\mbox{cl}(\emptyset)}$.
\end{itemize}
\end{itemize}

\end{definition}

As a silly example, note if $\mbox{cl}(\emptyset)$ is finite then $\mathcal{R}(\mbox{cl}) = \emptyset$ since there are no finite elementary substructures of $\mathfrak{C}_T$.

\begin{definition}
Suppose $((w_i, q_i, p_i): i < i_*)$ is a finite sequence from $\mathcal{R}(\mbox{cl})$. Write $w = \bigcup_i w_i$ and suppose $\overline{b} = (b_\alpha: \alpha \in w)$ is a sequence in $\mathfrak{C}_T$. Then $\overline{b}$ is a \emph{good instantiation} of $(\overline{w}, \overline{q}, \overline{p})$ if:

\begin{itemize}
\item For each $i < i_*$, $\overline{b}_{w_\gamma}$ is a realization of $q_\gamma$;
\item For each $\alpha \in w$, if we set $v = \bigcap\{w_i: i < i_*, \, \alpha \in w_i\}$ then $tp\left( b_\alpha / \overline{b}_{w \cap \alpha}\right)$ does not fork over $\overline{b}_{v \cap \alpha}$;
\item For each $i, j < i_*$, $p_i(x, \overline{b}_{w_{i}})$ and $p_{j}(x, \overline{b}_{w_{j}})$ agree on $\overline{b}_{w_{i} \cap w_{j}}$.
\end{itemize}

\end{definition}

These should be viewed as type-amalgamation problems. We make the following definition.

\begin{definition} Suppose $(P, <)$ is a lower semilattice (a partial order with meets and $0$). Then $(M_u, p_u: u \in P)$ is an \emph{independent system of types} if

\begin{itemize}
\item Each $M_u$ is an elementary submodel of $\mathfrak{C}$, and for all $u \leq v \in P$, $M_u \subseteq M_v$, and for all $(u_i: i < n)$, $(v_j: j < m)$, we have that

$$\bigcup_{i} M_{u_i} \forkindep_{\bigcup_{i ,j} M_{u_i \wedge v_j}} \bigcup_{j } M_{v_j}.$$ 

In particular each $M_u \cap M_v =M_{u \cap v}$.

\item Each $p_u(x)$ is a complete type over $M_u$ that does not fork over $M_0$, and $(p_u(x): u \in P)$ are pairwise compatible.
\end{itemize}
\end{definition}

Given an independent system of types $(M_u, p_u(x): u \in P)$, we are interested in asking: when is $\bigcup_u p_u(x)$ consistent and nonforking over $M_0$?

\begin{lemma}\label{InstantiationsAreInd}
Suppose $((w_i, q_i, p_i): i < i_*)$ is a finite sequence from $\mathcal{R}(\mbox{cl})$ and $\overline{b}$ is a good instantiation of $(\overline{w}, \overline{q}, \overline{p})$. Let $P$ be the closure of $\{w_i: i < n\}$ under intersections, ordered by subset. For each $u \in P$ let $p_u(x, \overline{b}_u)$ be $p_i(x, \overline{b}_{w_i})$ for some or any $w_i \supseteq u$. Then $(\overline{b}_{u}, p_u(x, \overline{b}_{u}): u  \in P)$ is an independent system of types.
\end{lemma}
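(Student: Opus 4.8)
The plan is to verify the two bullet points in the definition of an independent system of types directly from the three conditions defining a good instantiation, using the standard forking calculus for simple theories (transitivity, symmetry, and monotonicity of $\forkindep$).

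First I would check that the map $u \mapsto \overline{b}_u$ is well-defined and that $P$ is genuinely a lower semilattice: $P$ is the closure of $\{w_i : i < i_*\}$ under finite intersections, ordered by inclusion, so meets in $P$ are just intersections, and the least element is $\bigcap_i w_i$ (or $\mathrm{cl}(\emptyset)$-related bottom); each $w\in P$ is $\mathrm{cl}$-closed because an intersection of $\mathrm{cl}$-closed sets is $\mathrm{cl}$-closed, so by the defining property of $\mathcal{R}(\mathrm{cl})$ each $\overline{b}_u$ enumerates an elementary substructure $M_u \preceq \mathfrak{C}_T$. Monotonicity $M_u \subseteq M_v$ for $u \subseteq v$ is immediate. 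The compatibility and nonforking-over-$M_0$ clauses for the $p_u$ come from the third bullet of good instantiation together with the fact that each $(w_i,q_i,p_i) \in \mathcal{R}(\mathrm{cl})$ forces $p_i(x,\overline{b}_{w_i})$ not to fork over $\overline{b}\restriction_{\mathrm{cl}(\emptyset)}$; one must check $\mathrm{cl}(\emptyset) \subseteq u$ for all $u \in P$, which holds since each $w_i$ is $\mathrm{cl}$-closed and contains $\mathrm{cl}(\emptyset)$, hence so does every intersection.

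The substantive step is establishing the independence condition: for all tuples $(u_i : i < n)$ and $(v_j : j < m)$ from $P$,
\[
\bigcup_i M_{u_i} \forkindep_{\bigcup_{i,j} M_{u_i \wedge v_j}} \bigcup_j M_{v_j}.
\]
I would deduce this from the second bullet of good instantiation — that for each $\alpha \in w$, writing $v(\alpha) = \bigcap\{w_i : \alpha \in w_i\}$, $\tp(b_\alpha / \overline{b}_{w \cap \alpha})$ does not fork over $\overline{b}_{v(\alpha) \cap \alpha}$ — by an induction on $w$ in its increasing enumeration, building up the full "independence-over-the-intersection-lattice" statement one element at a time. This is the local-character/"independence along a tree" style argument: the element-by-element nonforking condition, indexed by the meet $v(\alpha)$ of all $w_i$ containing $\alpha$, is exactly the combinatorial skeleton that unwinds to the statement that the system of models sits in $\forkindep$-general position over the intersection lattice. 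Concretely I would show by transfinite induction that for every initial segment $w' = w \cap \gamma$, the family $(\overline{b}_{w_i \cap \gamma} : i < i_*)$ is independent over intersections, using at each successor step that adding $b_\alpha$ preserves independence because it forks only over $\overline{b}_{v(\alpha)\cap\alpha}$, and $v(\alpha)$ is precisely the meet in $P$ of the relevant coordinates; then transitivity and symmetry of forking in the simple theory $T$ package the increments together.

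I expect the main obstacle to be this inductive argument: bookkeeping which meets $u_i \wedge v_j$ appear and verifying that the element-level condition indexed by $v(\alpha) = \bigcap\{w_i : \alpha \in w_i\}$ is strong enough to yield the model-level independence over $\bigcup_{i,j} M_{u_i \wedge v_j}$ for arbitrary finite tuples from $P$ — in particular handling the case where $P$ has elements that are intersections of three or more $w_i$'s, so that the naive pairwise statement does not immediately suffice and one really needs the full strength of the "stronger independence property" the author mentions having built into the definition of good instantiation. Once the independence of $(\overline{b}_u : u \in P)$ over the intersection lattice is in hand, the remaining clause $M_u \cap M_v = M_{u \wedge v}$ follows formally from independence (an element in both $M_u$ and $M_v$ is algebraic over $M_{u\wedge v}$ by independence, hence in $M_{u\wedge v}$ since the latter is a model), completing the verification.
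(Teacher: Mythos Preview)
Your proposal is correct and follows essentially the same route as the paper: an element-by-element induction along the increasing enumeration of $w$, using the second bullet of good instantiation together with transitivity and monotonicity of forking. The paper streamlines one point you leave implicit: rather than carrying the whole lattice through the induction, it fixes the tuples $(u_i)_{i<n}$, $(v_j)_{j<m}$ in advance and observes that $\bigcup_{i,j}\overline{b}_{u_i\wedge v_j}=\overline{b}_{u_*\cap v_*}$ where $u_*=\bigcup_i u_i$, $v_*=\bigcup_j v_j$, reducing the target to the single statement $\overline{b}_{u_*}\forkindep_{\overline{b}_{u_*\cap v_*}}\overline{b}_{v_*}$; this dissolves exactly the ``intersections of three or more $w_i$'s'' bookkeeping you flagged as the main obstacle, since at the successor step one only needs $v(\alpha)\subseteq u_*$ (which holds because any $u_i\in P$ containing $\alpha$ is an intersection of $w_k$'s each containing $\alpha$, hence contains $v(\alpha)$).
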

\begin{proof}
Write $w= \bigcup_i w_i$.

Let $u_i: i <n$, $v_j: j < m$ be given elements of $P$; write $u_* = \bigcup_{i < n} u_i$ and let $v_* = \bigcup_{j < m} v_j$. Note that $\bigcup_{i, j} \overline{b}_{u_i \wedge v_j} = \bigcup_{i, j} \overline{b}_{u_i \cap v_j} = \overline{b}_{u_* \cap v_*}$, so we want to show that $\overline{b}_{u_*} \forkindep_{\overline{b}_{u_* \cap v_*}} \overline{b}_{v_*}$. We show by induction on $\alpha < \lambda$ that $\overline{b}_{u_* \cap \alpha} \forkindep_{\overline{b}_{u_* \cap v_* \cap \alpha}} \overline{b}_{v_* \cap \alpha}$. $\alpha = 0$ and $\alpha$ limit are both trivial. Suppose $\alpha = \beta + 1$; if $\beta \not \in u_* \cup v_*$ or $\beta \in u_* \cap v_*$ this step is trivial, so we can assume $\beta \in u_* \backslash v_*$. We can suppose after renumbering that $\beta \in u_i$ iff $i < n'$, for some $0 < n' \leq n$.

By definition of a good instantiation, if we set $u = \bigcap\{w_i: \beta \in w_i\}$ then $b_\beta \forkindep_{\overline{b}_{u \cap \beta}} \overline{b}_{w \cap \beta}$. Note $u \subseteq u_i$ for each $i < n'$. Thus $u \subseteq u_*$, so $b_\beta \forkindep_{\overline{b}_{u_* \cap \beta}} \overline{b}_{w \cap \beta}$, so $\overline{b}_{u_* \cap \alpha} \forkindep_{\overline{b}_{u_* \cap \beta}} \overline{b}_{v_* \cap \beta}$. Thus, by transitivity of forking and the inductive hypothesis, $\overline{b}_{u_* \cap \alpha} \forkindep_{\overline{b}_{u_* \cap v_* \cap \beta}} \overline{b}_{v_* \cap \beta}$; since $\beta \not \in v_*$ this is what we wanted to prove.
\end{proof}

Recall that $P_{X, \mu, \sigma}$ is the set of all partial functions from $X$ to $\mu$ of cardinality less than $\sigma$, ordered by reverse inclusion.

\begin{definition}
Suppose $\aleph_0 < \sigma \leq \mu = \mu^{<\sigma} \leq \lambda$ with $\sigma$ regular, $\mbox{cl}$ is a $\sigma$-closure relation on $\lambda$ and $T$ is a countable simple theory. Then say that $T$ has $(\lambda, \mu, \sigma, \mbox{cl})$-\emph{type amalgamation} if there is some $G: \mathcal{R}(\mbox{cl}) \to P_{\lambda, \mu, \sigma}$ satisfying the following. Suppose $(\overline{w}, \overline{q}, \overline{p}) = ((w_i, q_i, p_i): i < i_*)$ is a finite sequence from $\mathcal{R}(\mbox{cl})$ such that $\bigcup_{i < i_*} G(w_i, q_i, p_i)$ is a function. Then for every good instantiation $\overline{b}$ of $(\overline{w}, \overline{q}, \overline{p})$, we have that $\bigcup_{i < i_*} p_i(x, \overline{b}_{w_i})$ is a consistent partial type which does not fork over $\overline{b}_{\mbox{cl}(\emptyset)}$.
\end{definition}

\begin{definition}
Suppose $\aleph_0 < \sigma \leq \mu = \mu^{<\sigma} \leq \lambda$ with $\sigma$ regular, and $T$ is a countable simple theory. Then say that $T$ has $(\lambda, \mu, \sigma)$-\emph{type amalgamation} if $T$ has $(\lambda, \mu, \sigma, \mbox{cl})$-type amalgamation for every sufficiently expanded $\sigma$-closure operation $\mbox{cl}$ on $\lambda$.
\end{definition}

We now show that simple theories have $(\mu^+, \mu, \sigma)$-type-amalgamation for all $\sigma, \mu$ as above. Towards this we prove two lemmas; the first is just Claim 4.7 (2) from \cite{Optimals}.

\begin{lemma}\label{SimpleIsExplicitLemma1}
Suppose $\aleph_0 < \sigma \leq \mu = \mu^{<\sigma}$ with $\sigma$ regular; write $\lambda = \mu^+$.
There is a $\sigma$-closure operation $\mbox{cl}$ on $\lambda$ and a coloring $G: [\lambda]^{<\sigma} \to \mu$ such that whenever $(w_i: i < i_*)$ is a finite monochromatic sequence from $[\lambda]^{<\sigma}$, with each $w_i$ $\mbox{cl}$-closed, then the closure of $\overline{w}$ under intersections forms a tree under subset. (This then also holds for any expansion of $\mbox{cl}$.)
\end{lemma}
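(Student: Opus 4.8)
The plan is to prove this as a \emph{colored tree-$\Delta$-system} lemma, following the construction behind Claim~4.7(2) of \cite{Optimals}. First I would fix a combinatorial skeleton for $\lambda = \mu^+$. Choose a continuous $\subseteq$-increasing chain $\langle N_i : i < \lambda\rangle$ of elementary submodels of some structure $(H(\chi), \in, <^*)$, with $|N_i| = \mu$, $\mu + 1 \subseteq N_0$, $\langle N_j : j \le i\rangle \in N_{i+1}$, $N_i \cap \lambda$ an ordinal $\delta_i$ (so $\langle \delta_i\rangle$ is increasing, continuous and cofinal in $\lambda$), with the relevant data reflected in the $N_i$'s. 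For each $i$ fix a bijection $f_i : \mu \to N_i$ extending $\mathrm{id}_\mu$. For $\gamma < \lambda$ let $i(\gamma)$ be least with $\gamma \in N_i$; then $\gamma < \delta_{i(\gamma)}$, so $f_{i(\gamma)}^{-1}$ restricted to $\gamma+1$ injects $\gamma+1$ into $\mu$, giving $\gamma$ a canonical ``name'' $\rho(\gamma) := f_{i(\gamma)}^{-1}(\gamma) < \mu$ that is computed \emph{absolutely inside} $N_{i(\gamma)}$. Since $\mu = \mu^{<\sigma}$, also fix once and for all an injection of ${}^{<\sigma}\mu$ into $\mu$.

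Next I would define the closure relation. The $<\sigma$-many finitary functions $F_\gamma : \lambda^{<\omega} \to \lambda$ are chosen so that $w$ is $\mathrm{cl}$-closed exactly when it is closed under (finitely many parametrised) Skolem functions of the $N_i$'s and is \emph{downward closed along names}: for every $\gamma \in w$, all ordinals entering the computation of $\rho(\gamma)$ inside $N_{i(\gamma)}$ already lie in $w$. Because $\sigma$ is regular and uncountable and the $F_\gamma$ are finitary, $\mathrm{cl}(A) \in [\lambda]^{<\sigma}$ whenever $A \in [\lambda]^{<\sigma}$, so this is a bona fide $\sigma$-closure relation. Then, for $w \in [\lambda]^{<\sigma}$ with increasing enumeration $\langle w_\xi : \xi < \tau\rangle$, I would let $G(w)$ be the code (via the fixed injection of ${}^{<\sigma}\mu$ into $\mu$) of the triple consisting of $\tau < \sigma$, the order-pattern of $\langle i(w_\xi) : \xi < \tau\rangle$, and the ``name diagram'' sending each relevant pair $\xi \le \eta < \tau$ to the $N_{i(w_\eta)}$-name of $w_\xi$; since $|\tau| < \sigma$ this diagram lies in ${}^{<\sigma}\mu$, so $G$ maps $[\lambda]^{<\sigma}$ into $\mu$.

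The main claim is then: if $w_0, \dots, w_{i_*-1}$ are $\mathrm{cl}$-closed with $G(w_0) = \dots = G(w_{i_*-1})$, then for all $a,b$ the set $w_a \cap w_b$ is an initial segment of $w_a$ under the ordinal ordering (the $w_a$'s ``grow from nested stems''). Granting this, any two sets $w_a \cap w_b$ and $w_a \cap w_c$ are initial segments of $w_a$, hence $\subseteq$-comparable; and then a short argument — each element $t$ of the intersection-closure lies below some generator $w_a$, and every $s \subsetneq t$ in the closure is an intersection of sets of the form $w_b \cap w_a$, each an initial segment of $w_a$ — shows the predecessor set of any $t$ is linearly ordered, i.e. the intersection-closure is a tree under $\subseteq$. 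The parenthetical remark is immediate: expanding $\mathrm{cl}$ only shrinks the family of closed sets while $G$ is unchanged.

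I expect the crux — and the main obstacle — to be that main claim, i.e. the \emph{rigidity}: sharing a color must force a common element $\gamma \in w_a \cap w_b$ to sit at the same place with the same past in both sets, so that $w_a$ and $w_b$ coincide below $\gamma$. The danger is that the names $\rho(\cdot)$ are only ordinals $<\mu$ and could coincide accidentally between unrelated regions of $\lambda$; this is exactly why names are read off inside the \emph{fixed} models $N_i$ and why the closure must witness the computations of those names. One then argues by induction along $w_a \cap w_b$: at a common element $\gamma$, the name diagram (part of the common color) says the $N_{i(\gamma)}$-name of the $\xi$-th element is the same on both sides, and $f_{i(\gamma)}^{-1}$ is injective on $\gamma + 1 \subseteq N_{i(\gamma)}$, forcing agreement, while the order-pattern component of the color together with ``downward closure along names'' keeps the indices $\xi$ synchronised so the induction can proceed. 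Checking that the closure functions can genuinely be arranged to make all of this go through, together with the routine cardinal bookkeeping, is where the work lies.
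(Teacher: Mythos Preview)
Your plan is in the right spirit and could likely be completed, but it is far more elaborate than what the paper actually does, and the part you flag as the crux---synchronising the position of a common element $\gamma$ in $w_a$ and $w_b$ so that the name-diagram entries line up---is exactly the delicate step your sketch does not yet close. As written, nothing forces $\gamma$ to occupy the same index $\eta$ in both enumerations, so you cannot directly compare the relevant diagram entries; your ``order-pattern plus downward closure'' remark gestures at a fix but does not supply one.

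The paper sidesteps all of this with a two-line construction. Fix for each $\alpha<\lambda$ a bijection $f_\alpha:\alpha\to|\alpha|\le\mu$, let the closure be generated by just two binary functions $F_0(\alpha,\beta)=f_\alpha(\beta)$ (for $\beta<\alpha$) and $F_1(\alpha,\beta)=f_\alpha^{-1}(\beta)$ (for $\beta<|\alpha|$), and define $G(w)$ so that $G(w)=G(w')$ iff $w\cap\mu=w'\cap\mu$ (there are at most $\mu^{<\sigma}=\mu$ such traces). Since the color is literally the $\mu$-trace, intersections of monochromatic closed sets are again monochromatic, so it suffices to show that for $w_0\subseteq w_1$ closed with $w_0\cap\mu=w_1\cap\mu$, the set $w_0$ is an initial segment of $w_1$. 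And that is a three-line argument: if $\alpha\in w_0$ and $\beta<\alpha$ lies in $w_1$, then $\gamma:=f_\alpha(\beta)\in w_1\cap\mu=w_0\cap\mu$ by $F_0$-closure, whence $\beta=f_\alpha^{-1}(\gamma)\in w_0$ by $F_1$-closure.

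The contrast with your approach is instructive: no elementary submodels, no Skolem hulls, no name diagrams, no induction along common elements, and no position-synchronisation problem---the coloring records only $w\cap\mu$, which is automatically stable under intersection and carries no positional data at all. Your heavier machinery might pay off in settings where one cannot uniformly code each $\alpha<\lambda$ into $\mu$, but for $\lambda=\mu^+$ the paper's trick is both shorter and avoids the gap you would still need to fill.
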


\begin{proof}
For each $\alpha < \lambda$ choose $f_\alpha: \alpha \to |\alpha|$ a bijection; so always $f_\alpha: \alpha \to \mu$. For $\beta < \alpha$, define $F_0(\alpha, \beta) = f_\alpha(\beta)$; and for $\beta < |\alpha|$, define $F_1(\alpha, \beta) = f_\alpha^{-1}(\beta)$. Let $\mbox{cl}$ be generated by $F_0$ and $F_1$. Define $G: [\lambda]^{<\sigma} \to \mu$ so that $G(w) = G(w')$ iff $w\cap\mu = w' \cap \mu$. We claim that this works.

Note that if $G(w_0) = G(w_1) = \ldots = G(w_n)$ then these are also equal to $G(w_0 \cap \ldots \cap w_n)$. Suppose we are given $(w_i: i < i_*)$. Let $P$ be the closure of $(w_i: i < i_*)$ under intersections; so $P$ is also monochromatic. It suffices to show that whenever $w_0 \subseteq w_1$ are both from $P$,  we have that $w_0$ is an initial segment of $w_1$. Suppose $\alpha \in w_0$ and $\beta < \alpha$ is in $w_1$. Write $\gamma = f_\alpha(\beta)$; then $\gamma < \mu$ so $\gamma \in w_1 \cap \mu = w_0 \cap \mu$. So $f_\alpha^{-1}(\gamma) = \beta \in w_0$. 
\end{proof}

\begin{lemma}\label{SimpleIsExplicitLemma2}
Suppose $T$ is a simple theory and $(S, <)$ is a tree and $(M_s, p_s(x): s \in S)$ is an independent system of types. Then $\bigcup_s p_s(x)$ is consistent and does not fork over $M_0$.
\end{lemma}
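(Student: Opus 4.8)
The plan is to prove the statement by induction on the structure of the tree $(S, <)$, reducing to the case of amalgamating types along a single ``fork'' and then invoking the independence theorem for simple theories (the type amalgamation theorem over a model). First I would reduce to the finite case: by compactness and finite character of forking, it suffices to show that for every finite subtree $S_0 \subseteq S$ containing the root $0$, the type $\bigcup_{s \in S_0} p_s(x)$ is consistent and does not fork over $M_0$; moreover, since the union of a chain of independent systems over finitely many nodes is again such a system, we may assume $S$ is finite. Then I would induct on $|S|$. If $S$ has a single node this is immediate. Otherwise pick a maximal element $t \in S$, and let $S' = S \setminus \{t\}$, which is still a tree with root $0$. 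By induction $\bigcup_{s \in S'} p_s(x)$ is consistent and nonforking over $M_0$; let $N$ realize this restricted to a model, or rather work inside $\mathfrak{C}$ and choose a realization issue carefully.

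More precisely, here is the key step. Let $t^-$ be the immediate predecessor of $t$ in the tree (the unique maximal element of $S'$ below $t$; if $t$ is itself a root-successor, $t^- = 0$). The independent-system hypothesis gives $M_t \forkindep_{M_{t^-}} \bigcup_{s \in S'} M_s$, because for $s \in S'$ the meet $s \wedge t$ lies on the branch below $t$, hence is $\leq t^-$, so $M_{s \wedge t} \subseteq M_{t^-}$. Now I have two types to amalgamate over the model $M_{t^-}$: the type $p_t(x)$ over $M_t$, which does not fork over $M_0 \subseteq M_{t^-}$ hence does not fork over $M_{t^-}$; and the type $r(x) := \bigcup_{s \in S'} p_s(x)$, which by induction does not fork over $M_0 \subseteq M_{t^-}$. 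These two types agree on $M_{t^-}$ (both restrict to $p_{t^-}(x)$, using pairwise compatibility of the $p_u$). Since the domains $M_t$ and $\bigcup_{s \in S'} M_s$ are independent over $M_{t^-}$, the independence theorem over the model $M_{t^-}$ (valid in any simple theory; see \cite{KimPillay}) yields a common nonforking extension $p(x)$ over $\bigcup_{s \in S} M_s$ that does not fork over $M_{t^-}$, hence (by transitivity, since everything is nonforking over $M_0$) does not fork over $M_0$. That $p(x) \supseteq \bigcup_{s \in S} p_s(x)$ is exactly what we want.

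The main obstacle — or at least the point requiring care — is bookkeeping with the independence theorem: one must check that the two ``sides'' being amalgamated are genuinely independent over $M_{t^-}$ (this is where the full independence of the system, not just pairwise $M_u \cap M_v = M_{u \wedge v}$, is used, and it is why the authors strengthened the independence requirement in the definition, as noted before Lemma~\ref{InstantiationsAreInd}), and that the compatibility of the $p_u$'s propagates to the ``merged'' type $r(x)$ so that the hypothesis ``agree on the intersection model'' of the independence theorem applies. A secondary point is making the compactness reduction to finite trees clean, ensuring that a finite subtree can always be taken to contain the root and to be closed downward so that it remains a tree with the induced independent system structure; this is routine but should be stated. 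One should also note that consistency of $\bigcup_s p_s(x)$ follows automatically from exhibiting the nonforking extension $p(x)$, so there is nothing extra to prove there.
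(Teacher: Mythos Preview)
Your proof is correct and uses the same essential tool (the independence theorem over a model) as the paper, but the induction is organized differently. The paper argues by a downward induction on the tree: starting from the leaves, it shows for each $s\in S$ that $\bigcup_{t\geq s}p_t(x)$ is consistent and nonforking over $M_0$; at a node $s$ with immediate successors $s_0,\ldots,s_{k-1}$, it amalgamates the $k$ inductively-given types $p_i(x)=\bigcup_{t\geq s_i}p_t(x)$ over $M_s$ using (an iterated form of) the independence theorem, after noting that the sets $A_i=\bigcup_{t\geq s_i}M_t$ are independent over $M_s$. Your approach instead inducts on $|S|$, peeling off a single maximal node $t$ at each step and amalgamating $p_t$ with the inductively-obtained union over $S\setminus\{t\}$ across the base $M_{t^-}$. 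Both routes work; yours has the minor advantage of making the reduction to finite trees explicit (the paper tacitly assumes finiteness, which is harmless since only finite trees arise in its application), and it only ever amalgamates two types at a time, so the binary independence theorem can be invoked directly. The paper's route is a bit more structurally uniform with the tree, at the cost of needing the $k$-fold amalgamation at a branching node, which of course follows by iterating the binary case.
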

\begin{proof}
We show by induction on $s \in S$ that $\bigcup_{t \geq s} p_t(x)$ is consistent and does not fork over $M_0$. If $s$ is a leaf this is obvious. Suppose $s \in S$ and we have proved the claim for each $t > s$. Let $s_i: i < k$ list the immediate successors of $s$, and for each $i < s$ let $A_i = \bigcup_{t \geq s_i} M_t$. By definition of independent systems of types, we have that $(A_i: i < k)$ is independent over $M_s$. For each $i < k$ let $p_i(x) = \bigcup_{t \geq s_i} p_t(x)$. Then each $p_i(x)$ is a consistent partial type over $A_i$ which is complete over $M_s$ and does not fork over $M_0$. By the independence theorem, $\bigcup_{i < k} p_i(x)$ is consistent and does not fork over $M_s$. Write $A= \bigcup_{i < k} A_i$ and choose $a$ realizing $\bigcup_{i < k} p_i(x)$ such that $tp(a/A)$ dnf over $M_s$. Then $tp(a /M_s) = p_s(x)$ does not fork over $M_0$, so we conclude by transitivity that $\bigcup_{i < k} p_i(x)$ dnf over $M_0$.
\end{proof}

\begin{theorem}\label{SimpleIsExplicit}
Suppose $\aleph_0 < \sigma \leq \mu = \mu^{<\sigma} $ with $\sigma$ regular. Suppose $T$ is a countable simple theory. Then $T$ has $(\mu^+, \mu, \sigma)$-type amalgamation.
\end{theorem}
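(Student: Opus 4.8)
The plan is to combine Lemmas~\ref{InstantiationsAreInd}, \ref{SimpleIsExplicitLemma1} and~\ref{SimpleIsExplicitLemma2}, using the coloring of Lemma~\ref{SimpleIsExplicitLemma1} to force the intersection-closed families of $w_i$'s to be trees, and then transferring the conclusion of Lemma~\ref{SimpleIsExplicitLemma2} to the base $\overline{b}_{\cl(\emptyset)}$ that the definition of type amalgamation asks for. Write $\lambda = \mu^+$, and let $\cl_0$ and $G_0 : [\lambda]^{<\sigma} \to \mu$ be as provided by Lemma~\ref{SimpleIsExplicitLemma1}. It suffices to show that $T$ has $(\lambda, \mu, \sigma, \cl)$-type amalgamation for every $\sigma$-closure operation $\cl$ expanding $\cl_0$, since any \emph{sufficiently expanded} $\cl$ expands $\cl_0$. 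So fix such a $\cl$, and define $G : \mathcal{R}(\cl) \to P_{\lambda, \mu, \sigma}$ by letting $G(w, q, p)$ be the partial function with domain $\{0\}$ sending $0$ to $G_0(w)$; since $\sigma > \aleph_0$, this is a legitimate element of $P_{\lambda, \mu, \sigma}$.

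Now suppose $(\overline{w}, \overline{q}, \overline{p}) = ((w_i, q_i, p_i) : i < i_*)$ is a finite sequence from $\mathcal{R}(\cl)$ with $\bigcup_{i < i_*} G(w_i, q_i, p_i)$ a function. By the definition of $G$ this says exactly that $G_0(w_0) = \cdots = G_0(w_{i_*-1})$, i.e.\ $(w_i : i < i_*)$ is $G_0$-monochromatic; and each $w_i$, being $\cl$-closed, is $\cl_0$-closed. Let $P$ be the closure of $\{w_i : i < i_*\}$ under intersections, ordered by $\subseteq$. By Lemma~\ref{SimpleIsExplicitLemma1} applied to the expansion $\cl$ of $\cl_0$, $P$ is a tree; moreover $P$ is a lower semilattice whose least element is $u_* := \bigcap_{i < i_*} w_i$, and $u_*$ is $\cl$-closed with $\cl(\emptyset) \subseteq u_* \subseteq w_0$, so $\overline{b}_{u_*}$ and $\overline{b}_{\cl(\emptyset)}$ both enumerate elementary submodels whenever $\overline{b}$ is a good instantiation (by the defining clauses of $\mathcal{R}(\cl)$ applied to $(w_0,q_0,p_0)$, using that $\overline{b}_{w_0}$ realizes $q_0$).

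Fix a good instantiation $\overline{b}$ of $(\overline{w}, \overline{q}, \overline{p})$ and put $w = \bigcup_{i < i_*} w_i$. By Lemma~\ref{InstantiationsAreInd}, $(\overline{b}_u, p_u(x, \overline{b}_u) : u \in P)$ is an independent system of types, where $p_u$ is $p_i$ restricted to $\overline{x}_u$ for any $w_i \supseteq u$. Since $P$ is a tree, Lemma~\ref{SimpleIsExplicitLemma2} gives that $\bigcup_{u \in P} p_u(x, \overline{b}_u)$ is consistent and does not fork over $\overline{b}_{u_*}$. Choose $a$ realizing this union with $\tp(a / \overline{b}_w)$ nonforking over $\overline{b}_{u_*}$. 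Because $u_* \in P$ and $p_{u_*}(x, \overline{b}_{u_*})$ is complete over $\overline{b}_{u_*}$, we get $\tp(a / \overline{b}_{u_*}) = p_{u_*}(x, \overline{b}_{u_*}) = p_0(x, \overline{b}_{w_0}) \restriction \overline{b}_{u_*}$; since $(w_0, q_0, p_0) \in \mathcal{R}(\cl)$, this type does not fork over $\overline{b}_{\cl(\emptyset)}$. As $\cl(\emptyset) \subseteq u_* \subseteq w$, transitivity of nonforking yields that $\tp(a / \overline{b}_w)$ does not fork over $\overline{b}_{\cl(\emptyset)}$, and hence neither does its subset $\bigcup_{i < i_*} p_i(x, \overline{b}_{w_i})$, which is consistent (realized by $a$). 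This is exactly what $(\lambda, \mu, \sigma, \cl)$-type amalgamation demands of $G$, completing the proof.

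The lemmas carry the real content, so the only place needing care is matching up base sets: Lemma~\ref{SimpleIsExplicitLemma2} only delivers nonforking over $\overline{b}_{u_*}$ with $u_* = \bigcap_i w_i$, whereas the definition wants nonforking over $\overline{b}_{\cl(\emptyset)}$. The fix is the transitivity step above together with the observation that membership in $\mathcal{R}(\cl)$ already builds in nonforking of each $p_i$ over $\overline{b}_{\cl(\emptyset)}$; along the way one checks the small facts that $\cl(\emptyset)$ and $\bigcap_i w_i$ are $\cl$-closed subsets of every $w_i$ (so their $\overline{b}$-images are elementary submodels), and that $P$, being closed under finite intersections with minimum $\bigcap_i w_i$, really is a lower semilattice with $0$ as the definition of an independent system of types requires.
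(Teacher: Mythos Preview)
Your proposal is correct and takes essentially the same approach as the paper, whose entire proof is ``Clear, by the preceding lemmas.'' You have faithfully unpacked what that sentence means: use Lemma~\ref{SimpleIsExplicitLemma1} to obtain $\cl_0$ and $G_0$, encode $G_0$ into $P_{\lambda,\mu,\sigma}$, and for any $\cl$ expanding $\cl_0$ combine Lemmas~\ref{InstantiationsAreInd} and~\ref{SimpleIsExplicitLemma2}; the one detail you make explicit that the paper leaves to the reader is the transitivity step passing from nonforking over $\overline b_{u_*}$ (which is what Lemma~\ref{SimpleIsExplicitLemma2} delivers, $u_*$ being the root of the tree) down to nonforking over $\overline b_{\cl(\emptyset)}$, and your handling of that step is correct.
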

\begin{proof}
Clear, by the preceding lemmas.
\end{proof}

\section{Low is a Dividing Line}\label{LowDividingLine}

In \cite{Optimals}, \cite{InfManyClass} the notion of a perfect ultrafilter is introduced. We give an equivalent definition. Write $\mathcal{B} = \mathcal{B}_{2^\lambda, \mu, \sigma}$ and for each $\alpha < 2^\lambda$ write $\mathcal{B} =\mathcal{B}_{\alpha, \mu, \sigma}$. Also, if $\mathcal{U}$ is an ultrafilter on $\mathcal{B}$ and $\alpha < 2^\lambda$ then let $\mathcal{U} \restriction_{\mathcal{B}_\alpha}$ be the filter on $\mathcal{B}$ generated by $\mathcal{U} \cap \mathcal{B}_\alpha$.

\begin{definition}
The ultrafilter $\mathcal{U}$ on $\mathcal{B}$ is $\lambda$-\emph{perfect} if whenever $(\mathbf{b}_s: s \in [\lambda]^{<\aleph_0})$ is a distribution in $\mathcal{U}$, (A) implies (B):

\begin{itemize}
\item[(A)] For every $\delta < 2^\lambda$ (or equivalently, for arbitrarily large $\delta < 2^\lambda$) there is a multiplicative refinement $(\mathbf{b}'_s: s \in [\lambda]^{<\aleph_0})$ of $(\mathbf{b}_s)$ such that each $\mathbf{b}'_s$ is nonzero mod $\mathcal{U} \restriction_{\mathcal{B}_\delta}$;
\item[(B)] $(\mathbf{b}_s)$ has a multiplicative refinement in $\mathcal{U}$.
\end{itemize}
\end{definition}

The argument for the following theorem mirrors the saturation argument from \cite{Optimals} Theorem 7.3, where it is shown that if $\mathcal{U}$ is a certain kind of $\aleph_1$-complete ultrafilter on $\mathcal{B}_{2^{\sigma^+}, \sigma, \sigma}$, for $\sigma$ a supercompact cardinal, then $\mathcal{U}$ is $(\lambda, \mathcal{B}, T)$-moral for every simple theory $T$. The key difference between that argument and the following proof is that we have replaced the appeal to $\aleph_1$-completeness of $\mathcal{U}$ by an appeal to lowness of $T$.

\begin{theorem}\label{SatArg}
Suppose $\aleph_0 < \sigma \leq \mu = \mu^{<\sigma} \leq \lambda$ with $\sigma$ regular; write $\mathcal{B} = \mathcal{B}_{2^\lambda, \mu, \sigma}$ and for each $\alpha < \lambda$ write $\mathcal{B}_\alpha = \mathcal{B}_{\alpha, \mu, \sigma}$. Suppose $T$ has $(\lambda, \mu, \sigma)$-type amalgamation and is low, and $\mathcal{U}$ is a $\lambda$-perfect ultrafilter on $\mathcal{B}$. Then $\mathcal{U}$ is $(\lambda, \mathcal{B}, T)$-moral.
\end{theorem}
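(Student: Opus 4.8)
The plan is to prove that $\mathcal{U}$ is $(\lambda, \mathcal{B}, T)$-moral by taking an arbitrary distribution $(\mathbf{a}_s : s \in [\lambda]^{<\aleph_0})$ in $\mathcal{U}$ that is a $(\mathcal{B}, T, \overline{\phi})$-possibility — so it arises from some partial type $p(x) = \{\phi_\alpha(x, \overline{a}_\alpha) : \alpha < \lambda\}$ coded into $\mathcal{B}$ — and showing it has a multiplicative refinement in $\mathcal{U}$. Since $\mathcal{U}$ is $\lambda$-perfect, it suffices to verify condition (A): for arbitrarily large $\delta < 2^\lambda$, produce a multiplicative refinement $(\mathbf{a}'_s)$ of $(\mathbf{a}_s)$ with each $\mathbf{a}'_s$ nonzero mod $\mathcal{U}\restriction_{\mathcal{B}_\delta}$. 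The multiplicative refinement will be built by choosing, for each $\alpha < \lambda$, an element $\mathbf{a}'_{\{\alpha\}} \in \mathcal{B}_{[\delta, \delta + \lambda)}$ (a "fresh" block of coordinates disjoint from $\mathcal{B}_\delta$) and setting $\mathbf{a}'_s = \bigwedge_{\alpha \in s} \mathbf{a}'_{\{\alpha\}}$; the fact that the fresh block is generically independent of $\mathcal{B}_\delta$ is what will let us keep each finite meet nonzero mod $\mathcal{U}\restriction_{\mathcal{B}_\delta}$.

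The heart of the argument is to organize this construction so that the type-theoretic content of $p(x)$ is respected, and this is where $(\lambda, \mu, \sigma)$-type amalgamation and lowness enter. First I would fix a sufficiently expanded $\sigma$-closure relation $\mathrm{cl}$ on $\lambda$ and a witnessing coloring $G : \mathcal{R}(\mathrm{cl}) \to P_{\lambda, \mu, \sigma}$. For each $\alpha < \lambda$ I want to associate, via a generic object living on the fresh coordinate block, a $\mathrm{cl}$-closed set $w_\alpha \in [\lambda]^{<\sigma}$ containing $\alpha$ together with types $(w_\alpha, q_\alpha, p_\alpha) \in \mathcal{R}(\mathrm{cl})$ describing (a local piece of) $p(x)$ and the relevant forking data, and I would use the fresh coordinates to also read off a value of $G$ so that along any filter-large set the chosen $G$-values cohere into a single function. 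Then $\mathbf{a}'_{\{\alpha\}}$ should be defined to force that the generic piece of the type over the realization $\overline{b}_{w_\alpha}$ is exactly $p_\alpha(x, \overline{b}_{w_\alpha})$ and that the generically chosen data forms a good instantiation. Lemma~\ref{InstantiationsAreInd} then guarantees that finitely many such pieces assemble into an independent system of types, and $(\lambda, \mu, \sigma)$-type amalgamation (applied because the $G$-values agree) guarantees that the union of the corresponding types is consistent and nonforking over $\overline{b}_{\mathrm{cl}(\emptyset)}$ — which is precisely what makes $\mathbf{a}'_s = \bigwedge_{\alpha \in s}\mathbf{a}'_{\{\alpha\}}$ nonzero mod $\mathcal{U}\restriction_{\mathcal{B}_\delta}$, since nonzero-ness mod $\mathcal{U}\restriction_{\mathcal{B}_\delta}$ is witnessed by consistency of the amalgamated type in an ultrapower built from the quotient $\mathcal{B}/(\mathcal{U}\restriction_{\mathcal{B}_\delta})$.

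The role of lowness is to handle the passage from "does not fork" to something that can be detected by a single formula over the finitely many parameters actually present in a given coordinate, i.e.\ to make the forking constraints in the definition of good instantiation and of $\mathcal{R}(\mathrm{cl})$ \emph{first-order expressible} with a uniform bound, so that they can be coerced by Boolean combinations of the generating elements $\mathbf{x}_f$ of $\mathcal{B}$ rather than requiring an infinitary condition. Concretely, by Theorem~\ref{LowEquiv}(B), whenever $\phi_\alpha(x, \overline{a}_\alpha)$ does not fork over some small set $A$, there is a formula $\psi$ in $\mathrm{tp}(\overline{a}_\alpha / \overline{c})$ for finite $\overline{c} \in A$ such that \emph{any} realization of $\psi$ gives a nonforking instance; this lets the generic construction commit, on each coordinate, to a definable nonforking witness, which is what replaces the appeal to $\aleph_1$-completeness in \cite{Optimals} Theorem 7.3 (there $\aleph_1$-completeness was used to take countable intersections and thereby certify nonforking directly). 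I expect the main obstacle to be exactly this bookkeeping: defining the generic assignment $\alpha \mapsto (w_\alpha, q_\alpha, p_\alpha, G\text{-value})$ on the fresh block so that (i) it is "definable enough" over $\mathcal{B}_\delta$ to keep meets nonzero, (ii) the $\mathrm{cl}$-closure and the good-instantiation forking conditions are maintained — using lowness to keep them first-order — and (iii) the $G$-values can be forced to agree along a large set so that type amalgamation applies; everything after that is the now-routine verification that (A) holds and hence, by $\lambda$-perfection, (B) holds, giving morality.
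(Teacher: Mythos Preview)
Your overall strategy is right---reduce morality to condition (A) of $\lambda$-perfection, use type amalgamation via the coloring $G$, and use Theorem~\ref{LowEquiv}(B) in place of $\aleph_1$-completeness---and this is the paper's approach. But two points in your sketch are genuinely wrong or missing.

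First, the architecture you describe cannot work: you say each $\mathbf{a}'_{\{\alpha\}}$ lives in a fresh block $\mathcal{B}_{[\delta,\delta+\lambda)}$, but then $\mathbf{a}'_s$ cannot lie below $\mathbf{a}_s \in \mathcal{B}_\delta$ unless $\mathbf{a}_s = 1$. Relatedly, you have the roles of ``nonzero mod $\mathcal{U}\restriction_{\mathcal{B}_\delta}$'' and ``refinement'' reversed: in the paper, type amalgamation is used to verify $\mathbf{b}'_s \leq \mathbf{b}_s$ (i.e.\ refinement), while nonzero-ness comes from the antichain structure. The correct shape is that for each finite $s$ (not each $\alpha$) one chooses a maximal antichain $(\mathbf{b}_{s,\xi})_{\xi<\xi(s)}$ in $\mathcal{B}_\delta$ below $\mathbf{b}_s$, together with data $(w_{s,\xi}, q_{s,\xi}, p_{s,\xi}) \in \mathcal{R}(\mathrm{cl})$, and then sets $\mathbf{b}'_{\{\alpha\}} = \bigvee\{\mathbf{b}_{s,\xi} \wedge \mathbf{x}_{h_{s,\xi}} : \alpha \in s,\ \xi < \xi(s)\}$ where $h_{s,\xi}$ lives in the fresh block and encodes both $p_{s,\xi}$ and $G(w_{s,\xi}, q_{s,\xi}, p_{s,\xi})$.

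Second, and more seriously, you have no mechanism for the second clause of ``good instantiation'': for $\gamma \in w = \bigcup_\alpha w_\alpha$, why should $\mathrm{tp}(b_\gamma / \overline{b}_{w \cap \gamma})$ not fork over $\overline{b}_{v \cap \gamma}$ where $v = \bigcap\{w_\alpha : \gamma \in w_\alpha\}$? This does not follow from anything you have set up, and it is the main technical point of the proof. The paper handles it by a collision-detection device: fix a well-ordering $<_*$ of $[\lambda]^{<\sigma}$, and for suitable pairs $(\mathbf{a}, w)$ and $\gamma \in w$ define $\pi_{\mathbf{a},w}(\gamma)$ to be the $<_*$-least $u \subseteq \gamma$ such that (densely below $(\mathbf{a},w)$) $\mathrm{tp}(b_\gamma/\overline{b}_{w'\cap\gamma})$ does not fork over $\overline{b}_u$ for all extensions $w'$. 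One then restricts to a dense set $\mathcal{F}_*$ of pairs $(\mathbf{a},w)$ for which $\pi_{\mathbf{a},w}(\gamma)$ is defined, lies in $w$, and actually works. The point is that any two such pairs with compatible $\mathbf{a}$-coordinates compute the \emph{same} $\pi$-value on their overlap, so the required $u$ automatically lands in the intersection $v$. Without this (or something equivalent), the appeal to Lemma~\ref{InstantiationsAreInd} is unjustified and the amalgamation step does not go through.
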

\begin{proof}
Let $I$ be an index set of size $\lambda$, and choose a regular good filter $\mathcal{D}_0$ on $\mathcal{P}(I)$ and an isomorphism $\mathbf{j}: \mathcal{P}(I)/\mathcal{D}_0 \cong \mathcal{B}$ (this is possible by the Existence Theorem). Write $\mathcal{U}_* = \mathbf{j}^{-1}(\mathcal{U})$. We want to show that $\mathcal{U}_*$ $\lambda^+$-saturates $T$.

Let $M \models T$, and let $\overline{M} = M^I/\mathcal{D}$; we want to show that $\overline{M}$ is $\lambda^+$-saturated. Choose $M_0 \preceq \overline{M}$ an elementary substructure of size $\lambda$ and choose $p(x)$ a complete type over $M_0$. We want to show $p(x)$ is realized in $\overline{M}$.

Choose $\overline{a} = (a_\gamma: \gamma < \lambda)$ a sequence from $M^I$, such that $M_0 = \{[[a_\gamma/\mathcal{U}_*]]: \gamma < \lambda\}$, and further for every formula $\phi(x, x_0, \ldots, x_{n-1})$ and for every $\gamma_0, \ldots, \gamma_{n-1} < \lambda$, there is some $\gamma$ such that for every $i \in I$, $M[i] \models \exists x \phi(x, a_{\gamma_0}(i), \ldots, a_{\gamma_{n-1}}(i))$ iff $M[i] \models \phi(a(i), a_{\gamma_0}(i), \ldots, a_{\gamma_{n-1}}[i])$. (This is an easy part of Lemma 6.1 from \cite{Optimals}.)

Let $\{\phi_\alpha(x, \overline{a}): \alpha < \lambda\}$ be an enumeration of all formulas $\phi_\alpha(x, \overline{a})$ with parameters from $\overline{a}$, such that $p(x) \models \phi_\alpha(x, ([[a_\gamma/\mathcal{U}_*]]: \gamma <\lambda))$. For each $s \in [\lambda]^{<\aleph_0}$ let $B_s = \{i \in I: M[i] \models \exists x \bigwedge_{\alpha \in s} \phi_\alpha(x, \overline{a}(i))\} \in \mathcal{U}_*$ and let $\mathbf{b}_s = \mathbf{j}(B_s) \in \mathcal{U}$. Now $p(x)$ is realized in $\overline{M}$ if and only if $(B_s)$ has a multiplicative refinement in $\mathcal{U}_*$; by (the proof of) Separation of Variables, this is the case if and only if $(\mathbf{b}_s)$ has a multiplicative refinement in $\mathcal{U}$. So it suffices to show that the last condition holds.

It is convenient to define a $\mathcal{B}$-valued model of $T$ with universe $\mathbf{M} = (a_\gamma: \gamma < \lambda)$, and with evaluation function $||\phi(\overline{a})|| = \mathbf{j}(\{i \in I: M[i] \models \phi(\overline{a}(i))\})$. (Here $\phi(\overline{a})$ is any formula with parameters from $\overline{a}$.) Note then that each $\mathbf{b}_s =  ||\exists x \bigwedge_{\alpha \in s} \phi_\alpha(x, \overline{a})||$.

Choose $\mbox{cl}$ a $\sigma$-closure relation on $\lambda$ such that:

\begin{itemize}
\item For each $w \subseteq \lambda$ $\mbox{cl}$-closed, $\overline{a}_w \preceq \mathbf{M}$ as $\mathcal{B}$-valued models; that is whenever $\gamma_0, \ldots, \gamma_{n-1} \in w$, and for every formula $\phi(x_0, \ldots, x_{n-1})$, there is $\gamma \in w$ with $||\exists x \phi(x, a_{\gamma_0}, \ldots, a_{\gamma_{n-1}})|| = ||\phi(a_\gamma, a_{\gamma_0}, \ldots, a_{\gamma_{n-1}})||$;
\item For each $w \subseteq \lambda$ $\mbox{cl}$-closed, $\{\phi_\alpha(x, \overline{a}): \alpha \in w\}$ enumerates a complete type over $\overline{a}_w$;
\item $p(x)$ does not fork over $\overline{a}_{\mbox{cl}(\emptyset)}$;
\item $T$ has $(\lambda, \mu, \sigma, \mbox{cl})$-type amalgamation.
\end{itemize}

This is clearly possible since the first three items are preserved under expansions. Let $G: \mathcal{R}(\mbox{cl}) \to P_{\lambda, \mu, \sigma}$ witness that $T$ has $(\lambda, \mu, \sigma, \mbox{cl})$-type amalgamation. It suffices to show that for every $\delta$ large enough so that $\mathbf{M}$ is a $\mathcal{B}_\delta$-valued model (that is, $||\cdot||$ takes its values in $\mathcal{B}_\delta$), we have that $(\mathbf{b}_s)$ has a multiplicative refinement consisting of nonzero elements mod $\mathcal{U} \restriction_{\mathcal{B}_\delta}$.

\vspace{2 mm}

\noindent \textbf{Interlude.} We now analyze some of the ways an element $\mathbf{a} \in \mathcal{B}_\delta$ can interact with a set $w \in [\lambda]^{<\sigma}$. Let $\mathcal{F}_0$ be the set of all pairs $(\mathbf{a}, w)$, where $w \in [\lambda]^{<\sigma}$ and $\mathbf{a} \in \mathcal{B}_\delta$ is nonzero. We put an order on $\mathcal{F}_0$ as follows: $(\mathbf{a}, w) \leq (\mathbf{a}',w' )$ if and only if $w \supseteq w'$ and $\mathbf{a} \leq \mathbf{a}'$. 

Let $\mathcal{F}_1$ be the set of all $(\mathbf{a}, w) \in \mathcal{F}_0$ such that whenever $\phi(\overline{a}_w)$ is a formula with parameters from $\overline{a}_w$, then $\mathbf{a}$ decides $||\phi(\overline{a}_w)||$ (i.e. either $\mathbf{a} \leq ||\phi(\overline{a}_w)||$ or else $\mathbf{a} \leq \lnot ||\phi(\overline{a}_2)||$). In this case $\mathbf{a}$ defines a complete type $q_{\mathbf{a}, w}(\overline{y}_w)$ in the variables $\overline{y}_w$, in the natural way.  

Since $P_{\delta, \mu, \sigma}$ is $\sigma$-closed, we have that $\mathcal{F}_1$ is dense in $\mathcal{F}_0$. Also if $(\mathbf{a}, w), (\mathbf{a}', w') \in \mathcal{F}_1$ and $\mathbf{a} \cap \mathbf{a}' \not= 0$, then $q_{\mathbf{a},w}$ and $q_{\mathbf{a}', w'}$ agree on $\overline{y}_{w \cap w'}$. Note also that if $(\mathbf{a}, w) \in \mathcal{F}_0$ and $w$ is $\mbox{cl}$-closed and $\overline{b}$ is a realization of $q_{\mathbf{a}, w}(\overline{y}_w)$ (from $\mathfrak{C}_T$) then $\overline{b}$ enumerates an elementary substructure of $\mathfrak{C}_T$.

Now, suppose $(\mathbf{a}, w) \in \mathcal{F}_1$, $\alpha \in w$ and $u \subseteq w \cap \alpha$. Then say that $u$ \emph{works} for $(\mathbf{a}, w, \alpha)$ if whenever $(\mathbf{a}', w') \leq (\mathbf{a}, w)$ is in $\mathcal{F}_1$ and whenever $\overline{b}$ is a realization of $q_{\mathbf{a}', w'}(\overline{y}_{w'})$ (so $\overline{b}_{w}$ realizes $q_{\mathbf{a}, w}(\overline{y}_w)$), then we have that $tp(b_\alpha / \overline{b}_{w' \cap \alpha})$ does not fork over $\overline{b}_{u}$. 

For example, if there is some $u \subseteq w \cap \alpha$ that works for $(\mathbf{a}, w, \alpha)$, then in particular $w \cap \alpha$ works for $(\mathbf{a}, w, \alpha)$.

\vspace{1 mm}

\noindent \textbf{Claim 1.} Suppose $(\mathbf{a}, w) \in \mathcal{F}_0$ and $\alpha \in w$. Then there is $(\mathbf{a}', w') \leq (\mathbf{a}, w)$ in $\mathcal{F}_1$ such that $w' \cap \alpha$ works for $(\mathbf{a}', w', \alpha)$.
\begin{proof}
Suppose not. Build $(f_\gamma, w_\gamma: \gamma < \omega_1)$ so that:

\begin{itemize}
\item $f_\gamma \in P_{\delta, \mu, \sigma}$ and $w_\gamma \in [\lambda]^{<\sigma}$;
\item $\mathbf{x}_{f_\gamma} \leq \mathbf{a}$, $w_0 = w$;
\item $\gamma \leq \gamma'$ implies $f_\gamma \subseteq f_{\gamma'}$ and $w_\gamma \subseteq w_{\gamma'}$;
\item For limit $\gamma < \aleph_1$ we have $f_\gamma= \bigcup_{\gamma' < \gamma} f_{\gamma'}$ and $w_\gamma = \bigcup_{\gamma' < \gamma} w_{\gamma'}$;
\item Each $(\mathbf{x}_{f_\gamma}, w_\gamma) \in \mathcal{F}_1$;
\end{itemize}
(There is one more condition, but first note that $\bigcup_{\gamma < \aleph_1} q_{\mathbf{x}_{f_\gamma}, w_\gamma}(\overline{y}_{w_\gamma})$ is a complete type in the variables $\overline{y}_{w_*}$, where $w_* = \bigcup_{\gamma < \sigma} w_\alpha$. Let $\overline{b}$ be a realization of this type; so $\overline{b}_{w_\gamma}$ realizes $q_{\mathbf{x}_{f_\gamma}, w_\gamma}$.)
\begin{itemize}
\item For every $\gamma < \sigma$, $tp(b_{\alpha}/\overline{b}_{w_{\gamma+1} \cap \alpha})$ forks over $\overline{b}_{w_\gamma \cap \alpha}$.
\end{itemize}
This is possible since if at some stage $\gamma$ we couldn't continue then clearly $(\mathbf{x}_{f_\gamma}, w_\gamma)$ would be as desired. But now we have contradicted the simplicity of $T$, since $tp(b_\alpha/\overline{b})$ forks over every countable subset of $\overline{b}$.
\end{proof}

Fix for the rest of the proof a well-ordering $<_*$ of $[\lambda]^{<\sigma}$. We use $<_*$ to perform collision detections similarly to the various saturation arguments in \cite{DividingLine}, \cite{Optimals}, \cite{InfManyClass}.

Given $(\mathbf{a}, w) \in \mathcal{F}_1$ and $\alpha \in w$, say that $u \in [\alpha]^{<\sigma}$ is a \emph{candidate} for $(\mathbf{a}, w, \alpha)$ if there is some $(\mathbf{a}', w') \leq (\mathbf{a}, w)$ in $\mathcal{F}_1$ such that $u \subseteq w' \cap \alpha$ and $u$ works for $(\mathbf{a}, w, \alpha)$.  Define $\pi_{\mathbf{a}, w}(\alpha)$ to be the $<_*$-least $u \in [\alpha]^{<\sigma}$ such that $u$ is a candidate for $(\mathbf{a}, w, \alpha)$. 

Let $\mathcal{F}_*$ be the set of all $(\mathbf{a}, w)$ in $\mathcal{F}_1$ such that $w$ is $\mbox{cl}$-closed, and for each $\alpha \in w$, $\pi_{\mathbf{a}, w}(\alpha) \subseteq w$ and further $\pi_{\mathbf{a}, w}(\alpha)$ works for $(\mathbf{a}, w, \alpha)$. Note that whenever $(\mathbf{a}, w), (\mathbf{a}', w') \in \mathcal{F}_2$ and $\mathbf{a} \land \mathbf{a}' \not= 0$, then for every $\alpha \in w \cap w'$ we have that $\pi_{\mathbf{a}, w}(\alpha) = \pi_{\mathbf{a}', w'}(\alpha) \subseteq w \cap w'$. 

\vspace{1 mm}

\noindent \textbf{Claim 2.} $\mathcal{F}_*$ is dense in $\mathcal{F}_1$ (and hence in $\mathcal{F}_0$).
\begin{proof}
Just note that given $(\mathbf{a}, w) \in \mathcal{F}_1$ and $\alpha \in w$, if we let $u := \pi_{\mathbf{a}, w}(\alpha)$, and if we choose $(\mathbf{a}', w') \leq (\mathbf{a}, w)$ in $\mathcal{F}_1$ witnessing that $u$ is a candidate for $(\mathbf{a}, w, \alpha)$, then for any $(\mathbf{a}'', w'') \leq (\mathbf{a}', w')$ in $\mathcal{F}_1$, we have that $\pi_{\mathbf{a}'', w''}(\alpha) = u$ and $u$ works for $(\mathbf{a}'', w'', \alpha)$.
\end{proof}

With Claim 2 in hand, we can now proceed with the proof of the theorem.

For each $\alpha < \lambda$ let $v_\alpha \in [\lambda]^{<\aleph_0}$ be the set of all $\gamma < \lambda$ such that $a_\gamma$ occurs in $\phi_\alpha(\overline{a})$; given $s \in [\lambda]^{<\aleph_0}$ let $v_s = \bigcup_{\alpha \in s} v_\alpha$. Let $\overline{\phi}_s(x, \overline{a}_{v_s})$ denote the formula $\bigwedge_{\alpha \in s} \phi_\alpha(x, \overline{a}_{v_\alpha})$. Now we know that $\overline{\phi}_s(x, \overline{a}_{v_s})$ does not fork over $\overline{a}_{\mbox{cl}(\emptyset)}$. Hence by Theorem~\ref{LowEquiv}(B) we can choose a formula $\psi_s(\overline{y}_{v_s}, \overline{y}_{\mbox{cl}(\emptyset)}) \in tp(\overline{a}_{v_s}, \overline{a}_{\mbox{cl}(\emptyset)})$, such that whenever whenever $\overline{b}_{v_s}, \overline{b}_{\mbox{cl}(\emptyset)} \in \mathfrak{C}$ are such that $\models \psi_s(\overline{b}_{v_s}, \overline{b}_{\mbox{cl}(\emptyset)})$, then $\overline{\phi}_s(x, \overline{b}_{v_s})$ does not fork over $\overline{b}_{\mbox{cl}(\emptyset)}$. (This is where we use $T$ is low; if instead $T$ were just simple as in Theorem 7.3 from \cite{Optimals} then we would need countably many formulas from $tp(\overline{a}_{v_s},\overline{a}_{\mbox{cl}(\emptyset)})$, and hence would need $\aleph_1$-completeness of $\mathcal{U}$.) Thus, whenever $(\mathbf{a}, w) \in \mathcal{F}_*$ is such that $v_s \subseteq w$ and $\mathbf{a} \leq \mathbf{b}_s \land ||\psi_s(\overline{a}_{v_s}, \overline{a}_{\mbox{cl}(\emptyset)})||$, and whenever $\overline{b}$ realizes $q_{\mathbf{a}, w}$, then $\overline{\phi}_s(x, \overline{b}_{v_s})$ does not fork over $\overline{b}_{\mbox{cl}(\emptyset)}$. Write $\mathbf{b}^*_s = \mathbf{b}_s \land ||\psi_s(\overline{a}_{v_s}, \overline{a}_{\mbox{cl}(\emptyset)})||$; so $\mathbf{b}^*_s \in \mathcal{U}$.

For each $s \in [\lambda]^{<\aleph_0}$, choose a sequence $((\mathbf{b}_{s, \xi}, w_{s, \xi}): \xi < \xi(s))$ such that:

\begin{itemize}
\item $(\mathbf{b}_{s, \xi}: \xi < \xi(s))$ is a maximal antichain of $\mathcal{B}_\delta$ (and hence $\mathcal{B}$) below $\mathbf{b}_s^*$;
\item each $(\mathbf{b}_{s, \xi}, w_{s, \xi})  \in \mathcal{F}_*$;
\item each $v_s \subseteq w_{s, \xi}$.

\end{itemize}

In particular, for each $s, \xi$, $q_{s, \xi}(\overline{y}_{w_{s, \xi}}) := q_{\mathbf{b}_{s, \xi}, w_{s, \xi}}(\overline{y}_{w_{s, \xi}})$ makes sense and $\pi_{s, \xi} := \pi_{\mathbf{b}_{s, \xi}, w_{s, \xi}}$ makes sense.

Now let $s \in [\lambda]^{<\sigma}$ and let $\xi < \xi(s)$. Then we can choose a complete type $p_{s, \xi}(x, \overline{y}_{w_{s, \xi}})$ in the listed variables so that for each $\alpha \in s$, $\phi_\alpha(x, \overline{y}_{v_\alpha}) \in p_{s, \xi}$, and such that furthermore if $\overline{b}$ is some or any realization of $q_{s, \xi}$ then $p_{s, \xi}(x, \overline{b})$ does not fork over $\overline{b}_{\mbox{cl}(\emptyset)}$. This is possible by choice of $\psi_s$.

Note then that clearly for each $s, \xi$ we have that $(w_{s, \xi}, q_{s, \xi}, p_{s, \xi}) \in \mathcal{R}(\mbox{cl})$, so it makes sense to consider $G(w_{s, \xi}, q_{s, \xi}, p_{s, \xi})) \in P_{\lambda, \mu, \sigma}$.

For each $s \in [\lambda]^{<\aleph_0}$ and for each $\xi < \mu$ let $h_{s, \xi} \in P_{2^\lambda \backslash \delta, \mu, \sigma}$ be such that, whenever $h_{s, \xi}$ and $h_{s', \xi'}$ are compatible, then:

\begin{itemize}
\item $p_{s, \xi}$ and $p_{s', \xi'}$ are compatible;
\item $G(w_{s, \xi}, q_{s, \xi}, p_{s, \xi})$ and $G(w_{s', \xi'}, q_{s', \xi'}, p_{s', \xi'})$ are compatible.
\end{itemize}

This is not hard to do (just shift everything over by multiples of $\delta$).

Finally, let $\mathbf{b}'_{\{\alpha\}} = \bigvee\{\mathbf{b}_{s, \xi} \land \mathbf{x}_{h_{s, \xi}}: \alpha \in s \in [\lambda]^{<\aleph_0}, \xi < \xi(s)\}$ and let $\mathbf{b}'_s = \bigwedge_{\alpha \in s} \mathbf{b}'_{\{\alpha\}}$. We claim that $(\mathbf{b}'_s)$ is a multiplicative refinement of $(\mathbf{b}_s)$ consisting of nonzero elements of $\mathcal{U} \restriction_{\mathcal{B}_\delta}$. Multiplicativity is clear. Also, let $s \in [\lambda]^{<\sigma}$; we show that $\mathbf{b}'_s$ is nonzero mod $\mathcal{U} \restriction_{\mathcal{B}_\delta}$. Indeed, suppose $\mathbf{a} \in \mathcal{U} \cap \mathcal{B}_\delta$; it suffices to show that $\mathbf{a} \land \mathbf{b}'_s$ is nonzero. We can suppose $\mathbf{a} \leq \mathbf{b}^*_s$. Since $(\mathbf{b}_{s, \xi}: \xi < \xi(s))$ is a maximal antichain below $\mathbf{b}^*_s$, there must be some $\xi < \xi(s)$ such that $\mathbf{a} \land \mathbf{b}_{s, \xi}$ is nonzero. Then $\mathbf{a} \land \mathbf{b}_{s, \xi} \land \mathbf{x}_{h_{s, \xi}}$ is nonzero, but $\mathbf{b}_{s, \xi} \land \mathbf{x}_{h_{s, \xi}} \leq \mathbf{b}'_s$, so we have shown $\mathbf{a} \land \mathbf{b}'_s$ is nonzero.

So it remains to show that each $\mathbf{b}'_s \leq \mathbf{b}_s$. Suppose not. Then we can choose $\mathbf{c} \leq -\mathbf{b}_s$ nonzero, such that for each $\alpha \in s$ there is $s_\alpha \in [\lambda]^{<\aleph_0}$ with $\alpha \in s$, and there is $\xi_\alpha < \xi(s_\alpha)$, such that $\mathbf{c} \leq \mathbf{b}_{s_\alpha, \xi_\alpha} \wedge \mathbf{x}_{h_{s_\alpha, \xi_\alpha}}$. Write $w_\alpha = w_{s_\alpha, \xi_\alpha}$, write $q_\alpha = q_{s_\alpha, \xi_\alpha}$, etc. Let $w = \bigcup_{\alpha \in s} w_\alpha$. Then we can decrease $\mathbf{c}$ so that $(\mathbf{c}, w) \in \mathcal{F}_1$; so $q_{\mathbf{c}, w}$ makes sense. Note that $q_{\mathbf{c}, w} = \bigcup_{\alpha \in s} q_\alpha$ and $\pi_{\mathbf{c}, w} = \bigcup_{\alpha \in s} \pi_{\alpha}$.

Let $\overline{b}$ be a realization of $q_{\mathbf{c}, w}$. Then we claim $\overline{b}$ is a good instantiation of $((w_\alpha, q_\alpha, p_\alpha): \alpha \in s)$. To verify the nonforking condition, choose $\gamma \in w$ and write $u = \pi_{\mathbf{c}, w}(\gamma)$. Then clearly $u \subseteq w_\alpha$ whenever $\gamma \in w_\alpha$, and $tp(b_\gamma/\overline{b}_{w \cap \gamma})$ does not fork over $\overline{b}_{u}$. The other conditions are also clear.

Now, $\bigcup_{\alpha \in s} G(w_\alpha, q_\alpha, p_\alpha)$ is a function since the $h_\alpha$'s are compatible, so that means $\bigcup_{\alpha \in s} p_\alpha(x, \overline{b}_{w_\alpha})$ is consistent (and does not fork over $\overline{b}_{\mbox{cl}(\emptyset)}$). But this implies that $\mathbf{c}$ is compatible with $\mathbf{b}_s$, a contradiction. 
\end{proof}

Our plan to show that low is a dividing line is to show that in the special case $\lambda = \mu^+$ of the preceding Theorem~\ref{SatArg}, we have $\mathcal{U}$ is $(\lambda, \mathcal{B}, T)$ moral if and only if $T$ is low. Theorem~\ref{SatArg} provides one direction, namely the saturation half of the argument; we now provide the other.

An ultrafilter $\mathcal{U}$ on the complete Boolean algebra $\mathcal{B}$ is $\lambda$-\emph{OK} if whenever $(\mathbf{a}_n: n < \omega)$ is a descending sequence from $\mathcal{U}$ with $\mathbf{a}_0 = 1$, then there is a multiplicative distribution $(\mathbf{b}_s: s \in [\lambda]^{<\omega})$ from $\mathcal{U}$ such that each $\mathbf{b}_s \leq \mathbf{a}_{|s|}$. The following is Conclusion 12.16 from \cite{pEqualsTref} by Malliaris and Shelah.

\begin{theorem}
Suppose $\mathcal{U}$ is a regular ultrafilter on $\mathcal{P}(\lambda)$. If $\mathcal{U}$ $\lambda^+$-saturates some nonlow theory then $\mathcal{U}$ is $\lambda$-OK.
\end{theorem}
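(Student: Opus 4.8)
Here is the plan.

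The plan is to deduce $\lambda$-OK-ness by reading off the required multiplicative refinement from a type realized in an ultrapower of the nonlow theory $T$. First I would isolate the combinatorial content of nonlowness as a supply of $\varphi$-configurations with controlled inconsistency levels. Fix a nonlow formula $\varphi(x,\overline{y})$; conjoining to it $y' = y''$ in two fresh parameter variables preserves nonlowness and guarantees $\varphi$ has an unsatisfiable instance, so assume this. Along any $\emptyset$-indiscernible sequence $\langle\overline{b}_j : j<\omega\rangle$, consistency of $\{\varphi(x,\overline{b}_j) : j\in w\}$ depends only on $|w|$ and is downward closed in $|w|$, so the sequence has a well-defined \emph{threshold} $\ell\in\omega\cup\{\omega\}$: the set is consistent exactly when $|w|\leq\ell$. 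Let $A$ be the set of thresholds realized by $\emptyset$-indiscernible $\varphi$-sequences. Constant sequences at a satisfiable, resp.\ unsatisfiable, instance of $\varphi$ put $\omega,0\in A$; and the definition of nonlow says precisely that for each $k<\omega$ there is an instance of $\varphi$ dividing over $\emptyset$ but not $k$-dividing over $\emptyset$ --- the dividing witness is an $\emptyset$-indiscernible $\varphi$-sequence whose threshold is finite by compactness, and failure of $k$-dividing forces that threshold to be $\geq k$; hence $A\cap\omega$ is unbounded. Finally, by stretching (compactness) each $\ell\in A$ is realized by an $\emptyset$-indiscernible $\varphi$-sequence of length $\lambda$, so any $\lambda^+$-saturated $M\models T$ contains sequences $\langle\overline{b}^\ell_\alpha : \alpha<\lambda\rangle$ of threshold $\ell$ for every $\ell\in A$.

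Next I would fix such an $M$ and put $N := M^\lambda/\mathcal{U}$, which is $\lambda^+$-saturated by hypothesis. Given a descending sequence $\langle\mathbf{a}_n : n<\omega\rangle$ from $\mathcal{U}$ with $\mathbf{a}_0 = 1$, assign to each index $i<\lambda$ the value $\ell_i := \sup\{m\in A\cap\omega : i\in\mathbf{a}_m\}$; since the $\mathbf{a}_m$ are descending this set is an initial segment of $A\cap\omega$, so $\ell_i\in A$ (it is $0$ if the set is empty, its maximum if finite nonempty, and $\omega$ if all of $A\cap\omega$). Put $\overline{c}_\alpha[i] := \overline{b}^{\ell_i}_\alpha$ and let $\overline{c}_\alpha\in N$ be the class of $i\mapsto\overline{c}_\alpha[i]$. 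By {\L}o\'{s}'s theorem, for $s\in[\lambda]^{<\omega}$ with $|s|=n\geq 1$ one has $\{i : M\models\exists x\,\bigwedge_{\alpha\in s}\varphi(x,\overline{c}_\alpha[i])\} = \{i : \ell_i\geq n\} = \mathbf{a}_{m(n)}$, where $m(n) := \min\{m\in A\cap\omega : m\geq n\}\geq n$, and this set lies in $\mathcal{U}$ (for $s=\emptyset$ it is all of $\lambda$). Hence the type $p(x) := \{\varphi(x,\overline{c}_\alpha) : \alpha<\lambda\}$ over $N$ is finitely satisfiable, so by $\lambda^+$-saturation it is realized by some $e\in N$.

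To finish, set $B_\alpha := \{i : M\models\varphi(e[i],\overline{c}_\alpha[i])\}\in\mathcal{U}$ and $\mathbf{b}_s := \bigcap_{\alpha\in s}B_\alpha$ for $s\in[\lambda]^{<\omega}$. Then $(\mathbf{b}_s)$ is multiplicative, each $\mathbf{b}_s\in\mathcal{U}$ (a finite intersection of members of $\mathcal{U}$), and $\mathbf{b}_\emptyset = 1$. The key point is $\mathbf{b}_s\leq\mathbf{a}_{|s|}$: if $i\in\mathbf{b}_s$ then $e[i]$ realizes $\{\varphi(x,\overline{c}_\alpha[i]) : \alpha\in s\}$, so that set is consistent, so $|s|\leq\ell_i$ by the choice of threshold at $i$; when $|s|\geq 1$ this yields $i\in\mathbf{a}_{m(|s|)}\subseteq\mathbf{a}_{|s|}$, using $i\in\mathbf{a}_{\ell_i}$ when $\ell_i<\omega$ and $i\in\bigcap_m\mathbf{a}_m$ when $\ell_i=\omega$, while $s=\emptyset$ is trivial. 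Thus $(\mathbf{b}_s)$ witnesses that $\mathcal{U}$ is $\lambda$-OK. The step I expect to be the main obstacle is the first one: distilling from the bare definition of nonlow the statement that a single formula realizes an unbounded set of \emph{finite} thresholds, and the observation that this unboundedness already suffices --- one does not need every threshold, because descendingness of $\langle\mathbf{a}_n\rangle$ lets one pass from $\mathbf{a}_n$ to $\mathbf{a}_{m(n)}$ at no cost; everything afterward is routine {\L}o\'{s}-style bookkeeping with the ultrafilter.
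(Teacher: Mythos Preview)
The paper does not prove this theorem at all; it is quoted as Conclusion~12.16 of Malliaris--Shelah \cite{pEqualsTref} (and invoked again later via Lemma~1.21 of \cite{MalliarisFlex}). So there is no proof in the paper to compare against. Your argument supplies a correct, self-contained proof of the essential case, and the threshold-coding idea you use---choosing at each coordinate $i$ an indiscernible $\varphi$-sequence whose threshold $\ell_i$ records how deep into $(\mathbf a_n)$ the index $i$ lies, then reading the multiplicative refinement off a realization of the resulting type---is exactly the standard mechanism behind this result. The bookkeeping (that $\{i:\ell_i\ge n\}=\mathbf a_{m(n)}$, that $\mathbf b_s\subseteq\mathbf a_{|s|}$, etc.) is carried out carefully.

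One point to tighten. The paper defines \emph{low} as ``simple \emph{and} the formula condition,'' so \emph{nonlow} means ``not simple \emph{or} the formula condition fails.'' Your sentence ``the definition of nonlow says precisely that for each $k$ there is an instance of $\varphi$ dividing but not $k$-dividing'' only unpacks failure of the formula condition; it does not address non-simple theories that happen to satisfy the formula condition. Such theories exist: $\mathrm{DLO}$, for instance, is non-simple, yet for any formula the set of thresholds attained by indiscernible sequences is finite (by o-minimality the definable sets in one variable are uniformly finite unions of intervals, and indiscernibility rules out the ``overlapping-then-eventually-disjoint'' configurations), so the formula condition holds. For these theories your argument does not literally apply; one needs a separate observation, e.g.\ that a non-simple theory has $\mathrm{SOP}$ or $\mathrm{TP}_2$, and saturating either already forces $\mathcal U$ to be $\lambda$-OK (good, in the $\mathrm{SOP}$ case). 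This is a small gap relative to the statement exactly as phrased in the paper, though the substantive content---the case where the formula condition fails---is what your proof handles, and handles well.
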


The following is a straightforward generalization to complete Boolean algebras:

\begin{theorem}
Suppose $\mathcal{U}$ is an ultrafilter on a complete Boolean algebra $\mathcal{B}$. If $\mathcal{U}$ is $(\lambda, \mathcal{B}, T)$-moral for some nonlow theory $T$, then $\mathcal{U}$ is $\lambda$-OK.
\end{theorem}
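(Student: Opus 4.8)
The plan is to deduce this from the previous theorem by the same Existence-Theorem-plus-Separation-of-Variables reduction used in the proof of Theorem~\ref{SatArg}. First I would fix a cardinal $\nu \geq |\mathcal{B}| + \lambda$. Then $\mathcal{B}$ has the $\nu^+$-c.c.\ and $|\mathcal{B}| \leq 2^{\nu^+}$, so the Existence Theorem (with the cardinal called $\mu$ there taken to be $\nu$) provides a regular good filter $\mathcal{D}$ on $\mathcal{P}(\kappa)$, where $\kappa := \nu^+ \geq \lambda$, together with an isomorphism $\mathbf{j}\colon \mathcal{P}(\kappa)/\mathcal{D} \cong \mathcal{B}$. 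Writing $\pi\colon \mathcal{P}(\kappa) \to \mathcal{P}(\kappa)/\mathcal{D}$ for the quotient map and $\mathcal{U}_* := \mathbf{j}^{-1}(\mathcal{U})$, regarded as an ultrafilter on $\mathcal{P}(\kappa)$ extending $\mathcal{D}$ via $\pi$, the ultrafilter $\mathcal{U}_*$ is regular, and in particular $\lambda$-regular since $\kappa \geq \lambda$.

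Next, by Separation of Variables, the hypothesis that $\mathcal{U}$ is $(\lambda, \mathcal{B}, T)$-moral gives that $\mathcal{U}_*$ $\lambda^+$-saturates $T$. Since $T$ is nonlow, I can then apply the previous theorem --- observing that the Malliaris--Shelah proof of Conclusion 12.16 of \cite{pEqualsTref} only uses that the ultrafilter is $\lambda$-regular and $\lambda^+$-saturates a nonlow theory, not that its index set have size exactly $\lambda$ --- to conclude that $\mathcal{U}_*$ is $\lambda$-OK.

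It remains to push $\lambda$-OK-ness down the surjective Boolean homomorphism $\mathbf{j}\circ\pi\colon \mathcal{P}(\kappa) \to \mathcal{B}$. Given a descending sequence $(\mathbf{a}_n\colon n<\omega)$ from $\mathcal{U}$ with $\mathbf{a}_0 = 1$, choose $A_n \in \mathcal{P}(\kappa)$ with $(\mathbf{j}\circ\pi)(A_n) = \mathbf{a}_n$ and $A_0 = \kappa$, and replace each $A_n$ by $\bigcap_{m\leq n}A_m$; since $(\mathbf{a}_n)$ is descending this does not change $(\mathbf{j}\circ\pi)(A_n)$, so $(A_n)$ is now a descending sequence in $\mathcal{U}_*$ with $A_0 = \kappa$. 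Applying $\lambda$-OK of $\mathcal{U}_*$ yields a multiplicative distribution $(B_s\colon s\in[\lambda]^{<\omega})$ in $\mathcal{U}_*$ with each $B_s \subseteq A_{|s|}$; setting $\mathbf{b}_s := (\mathbf{j}\circ\pi)(B_s)$ and using that $\mathbf{j}\circ\pi$ preserves finite meets, complements, and $0,1$, one checks that $(\mathbf{b}_s)$ is a multiplicative distribution in $\mathcal{U}$ with $\mathbf{b}_s \leq \mathbf{a}_{|s|}$. Hence $\mathcal{U}$ is $\lambda$-OK.

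The cardinal bookkeeping and the homomorphism computations are routine --- note that only finite meets are ever transported through $\mathbf{j}\circ\pi$, so the failure of $\pi$ to preserve infinite joins is irrelevant. The one step deserving explicit comment, and the most likely target for a referee's objection, is the appeal to \cite{pEqualsTref} for an ultrafilter on $\mathcal{P}(\kappa)$ with $\kappa > \lambda$: one should either cite the version of that result stated for $\lambda$-regular ultrafilters, or include the short observation that its proof uses only a $\lambda$-regularizing family together with $\lambda^+$-saturation. (One could instead bypass the reduction and adapt the Malliaris--Shelah argument directly to $\mathcal{B}$, replacing $\lambda^+$-saturation by $(\lambda,\mathcal{B},T)$-morality throughout, but going through $\mathcal{P}(\kappa)$ is cleaner.)
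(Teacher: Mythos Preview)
Your proposal is correct and follows essentially the same route as the paper: apply the Existence Theorem to realize $\mathcal{B}$ as a quotient of some $\mathcal{P}(I)$, use Separation of Variables to transfer $(\lambda,\mathcal{B},T)$-morality to $\lambda^+$-saturation of $\mathcal{U}_*$, invoke the Malliaris--Shelah result to get $\lambda$-OK for $\mathcal{U}_*$, and push $\lambda$-OK back down through the homomorphism. Your explicit cardinal bookkeeping (choosing $\kappa=\nu^+\geq\lambda$ and noting that only $\lambda$-regularity is needed for Conclusion~12.16) makes precise a point the paper leaves implicit, but the argument is otherwise the same.
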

\begin{proof}
By the Existence Theorem, we can find some index set $I$, some regular good filter $\mathcal{D}_0$ on $\mathcal{P}(I)$, and some Boolean algebra isomorphism $\mathbf{j}: \mathcal{P}(I)/\mathcal{D}_0 \cong \mathcal{B}$.  Let $\mathcal{U}_* =  \mathbf{j}^{-1}(\mathcal{U})$. Then $\mathcal{U}$ is $(\lambda, \mathcal{B}, T)$-moral if and only if $\mathcal{U}_*$ $\lambda^+$-saturates $T$ for every theory $T$, by Separation of Variables; so it suffices to show that if $\mathcal{U}_*$ is $\lambda$-OK, then so is $\mathcal{U}$.

So let $(\mathbf{a}_n: n < \omega)$ be a descending sequence from $\mathcal{U}$, and for $s \in [\lambda]^{<\aleph_0}$ define $\mathbf{a}_s = \mathbf{a}_{|s|}$. Choose $(A_n: n < \omega)$ a descending sequence from $\mathcal{P}(I)$ with $\mathbf{j}(A_n) = \mathbf{a}_n$ and define $A_s = A_{|s|}$ for $s \in [\lambda]^{<\aleph_0}$. Then since $\mathcal{U}_*$ is $\lambda$-OK, we have that $(A_s: s \in [\lambda]^{<\aleph_0})$ has a multiplicative refinement in $\mathcal{U}_*$; then the image of this multiplicative refinement under $\mathbf{j}$ witnesses that $\mathcal{U}$ is $\lambda$-OK.
\end{proof}

The following theorem generalizes Corollary 9.9 from \cite{DividingLine} (which is stated for certain special cases of $\mathcal{B}$); the proof is different, following Theorem 4.1 of \cite{BooleanUltrapowers}.

\begin{theorem}
Suppose $\mathcal{B}$ is a complete Boolean algebra with the $\lambda$-c.c. and $\mathcal{U}$ is an $\aleph_1$-incomplete ultrafilter on $\mathcal{B}$. Then $\mathcal{U}$ is not $\lambda$-OK; in particular $\mathcal{U}$ is not $(\lambda, \mathcal{B}, T)$-moral for any unsimple or nonlow theory.
\end{theorem}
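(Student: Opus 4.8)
The plan is to prove that $\mathcal{U}$ is not $\lambda$-OK; the ``in particular'' clause is then immediate from the theorem stated just above, since any unsimple or nonlow $T$ is in particular one of the nonlow theories to which that theorem applies, so a $(\lambda,\mathcal{B},T)$-moral ultrafilter for such a $T$ would be forced to be $\lambda$-OK. To set up, I would first extract from $\aleph_1$-incompleteness a witnessing sequence inside $\mathcal{U}$: choose a countable $\{\mathbf{c}_n : n<\omega\}\subseteq\mathcal{U}$ with $\bigwedge_n\mathbf{c}_n\notin\mathcal{U}$, replace $\mathbf{c}_n$ by $\bigwedge_{m\le n}\mathbf{c}_m$ to make the sequence descending, meet each term with $-\bigwedge_m\mathbf{c}_m$ (which lies in $\mathcal{U}$ since $\mathcal{U}$ is an ultrafilter), and prepend $1$; this yields a descending $(\mathbf{a}_n : n<\omega)$ from $\mathcal{U}$ with $\mathbf{a}_0=1$ and $\bigwedge_n\mathbf{a}_n=0$. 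Now assume toward a contradiction that $\mathcal{U}$ is $\lambda$-OK and apply this to $(\mathbf{a}_n)$, obtaining a multiplicative distribution $(\mathbf{b}_s : s\in[\lambda]^{<\omega})$ from $\mathcal{U}$ with $\mathbf{b}_s\le\mathbf{a}_{|s|}$. Then every $\mathbf{b}_{\{\alpha\}}\in\mathcal{U}$ is nonzero, $\mathbf{b}_s=\bigwedge_{\alpha\in s}\mathbf{b}_{\{\alpha\}}$, and --- the one algebraic consequence I will need --- for each infinite $Z\subseteq\lambda$ we have $\bigwedge_{\alpha\in Z}\mathbf{b}_{\{\alpha\}}\le\bigwedge_n\mathbf{a}_n=0$, since for every $n$ any $n$-element $s\subseteq Z$ witnesses $\bigwedge_{\alpha\in Z}\mathbf{b}_{\{\alpha\}}\le\mathbf{b}_s\le\mathbf{a}_n$. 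The remaining job is to turn the $\lambda$-many nonzero elements $\mathbf{b}_{\{\alpha\}}$ into a contradiction with the $\lambda$-c.c.

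For this I would follow the Boolean-ultrapower method of Theorem~4.1 of \cite{BooleanUltrapowers}. Pass to the Boolean-valued universe $V^{\mathcal{B}}$ (equivalently, force with $\mathcal{B}$), let $\dot{G}$ be the canonical name for the generic ultrafilter, and let $\dot{X}$ be the canonical name $\{(\check\alpha,\mathbf{b}_{\{\alpha\}}) : \alpha<\lambda\}$, so that $\dot{X}$ is forced to equal $\{\alpha<\check\lambda : \mathbf{b}_{\{\alpha\}}\in\dot{G}\}$. The crux is the claim $||\,\dot{X}\text{ is finite}\,||=1$: if $\dot{X}$ were infinite it would, for every $n$, contain an $n$-element set $s$, and $\mathbf{b}_s=\bigwedge_{\alpha\in s}\mathbf{b}_{\{\alpha\}}\in\dot{G}$ would force $\mathbf{a}_n\in\dot{G}$; hence $||\,\dot{X}\text{ infinite}\,||\le\bigwedge_n||\,\mathbf{a}_n\in\dot{G}\,||=\bigwedge_n\mathbf{a}_n=0$, the essential point being that $(\mathbf{a}_n)$ is a \emph{ground-model} sequence with infimum $0$, so no generic filter realizes all of its terms. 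Thus $||\,\sup\dot{X}<\check\lambda\,||=1$. Now, with $\lambda$ regular (for definiteness; this is the only case needed here, where $\lambda=\mu^+$), the $\lambda$-c.c.\ lets me collapse this to a single bound: writing $\mathbf{e}_\beta=||\,\dot{X}\subseteq\check\beta\,||$, the ``first differences'' $\mathbf{f}_\beta=\mathbf{e}_\beta\wedge\bigwedge_{\beta'<\beta}(-\mathbf{e}_{\beta'})$ form an antichain with $\bigvee_\beta\mathbf{f}_\beta=\bigvee_\beta\mathbf{e}_\beta=1$, so fewer than $\lambda$ of them are nonzero and, by regularity, all lie at indices below some $\beta^*<\lambda$; hence $1=\bigvee_{\beta\le\beta^*}\mathbf{f}_\beta=\mathbf{e}_{\beta^*}$, i.e.\ $||\,\dot{X}\subseteq\check\beta^*\,||=1$. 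But then $\mathbf{b}_{\{\beta^*\}}=||\,\check\beta^*\in\dot{X}\,||\le||\,\check\beta^*\in\check\beta^*\,||=0$, contradicting $\mathbf{b}_{\{\beta^*\}}\ne 0$. This contradiction shows $\mathcal{U}$ is not $\lambda$-OK.

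I expect the main obstacle to be the middle step, $||\,\dot{X}\text{ finite}\,||=1$, and in particular recognizing exactly where each hypothesis is consumed: $\aleph_1$-incompleteness is precisely what prevents a $V$-generic filter from containing all terms of a ground-model $\omega$-sequence with infimum $0$, and the $\lambda$-c.c.\ is precisely what is needed to reflect the externally defined (generic) finite set $\dot{X}$ down to a fixed ordinal below $\lambda$ --- which is what finally collides with there being $\lambda$-many nonzero $\mathbf{b}_{\{\alpha\}}$. Everything else (the extraction from $\aleph_1$-incompleteness, the algebraic fact about infinite meets, and reading the ``in particular'' clause off the preceding theorem) should be routine.
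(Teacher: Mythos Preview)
Your argument is correct and is, in spirit, the same as the paper's --- both amount to showing that the generic set $\{\alpha:\mathbf{b}_{\{\alpha\}}\in\dot G\}$ is forced finite and then extracting a contradiction from the $\lambda$-c.c. --- but the execution differs. You work in $V^{\mathcal B}$, prove $\|\dot X\text{ finite}\|=1$, and then use regularity of $\lambda$ to bound $\dot X$ uniformly below some $\beta^*$. The paper stays entirely inside $\mathcal B$: it shows directly that below any nonzero $\mathbf{c}$ there is $\mathbf{c}'\le\mathbf{c}$ deciding every $\mathbf{b}_{\{\alpha\}}$ (by looking at the descending sequence $\mathbf{d}_n=\bigvee_{|s|=n}\mathbf{b}_s\le\mathbf{a}_n$ and picking $n$ with $\mathbf{c}\wedge\mathbf{d}_n\wedge(-\mathbf{d}_{n+1})\ne 0$), takes a maximal antichain $C$ of such deciders, and observes that $\alpha\mapsto(\text{some }\mathbf{c}\in C\text{ with }\mathbf{c}\le\mathbf{b}_{\{\alpha\}})$ is finite-to-one, forcing $|C|\ge\lambda$.

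The trade-off is this: your forcing formulation is conceptually clean, but as you note it relies on $\lambda$ regular to pass from ``$\dot X$ bounded'' to ``$\dot X\subseteq\check\beta^*$.'' The paper's finite-to-one map argument needs no such assumption and proves the theorem as stated for arbitrary $\lambda$. Your route is easily repaired: from $\|\dot X\text{ finite}\|=1$ choose a maximal antichain $A$ and finite sets $t_{\mathbf c}\subseteq\lambda$ with $\mathbf c\Vdash\dot X=\check t_{\mathbf c}$; then $\{\alpha:\mathbf{b}_{\{\alpha\}}\ne 0\}\subseteq\bigcup_{\mathbf c\in A}t_{\mathbf c}$, which has size $\le|A|\cdot\aleph_0<\lambda$, contradiction. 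Since only $\lambda=\mu^+$ is used downstream, your restriction is harmless for the paper's purposes, but it is worth noting that the direct Boolean-algebra argument avoids the issue altogether.
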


\begin{proof}
By the previous two theorems, the second claim follows from the first. So it suffices to show $\mathcal{U}$ is not $\lambda$-OK.

Choose $(\mathbf{a}_n: n < \omega)$ a descending sequence from $\mathcal{U}$ such that $\mathbf{a}_0 = 1$ and $\bigwedge_n \mathbf{a}_n = 0$, and for $s \in [\lambda]^{<\aleph_0}$ define $\mathbf{a}_s = \mathbf{a}_{|s|}$. Suppose towards a contradiction that $(\mathbf{a}_s: s \in [\lambda]^{<\aleph_0})$ had a multiplicative refinement $(\mathbf{b}_s: s \in [\lambda]^{<\aleph_0})$.

We claim that for every $\mathbf{c} \in \mathcal{B}$ nonzero, there is $\mathbf{c}' \leq \mathbf{c}$ such that $\mathbf{c}'$ decides $\mathbf{b}_{\{\alpha\}}$ for each $\alpha < \lambda$.  Indeed, let $\mathbf{c} \in \mathcal{B}$ be nonzero. For each $n < \omega$ let $\mathbf{d}_n = \bigvee\{\mathbf{b}_s: s \in [\lambda]^n\}$. Then $(\mathbf{d}_n: n < \omega)$ is a descending sequence with $\mathbf{d}_0 = 1$, and its intersection is empty since each $\mathbf{d}_n \leq \mathbf{a}_n$. Thus there is some number $n$ with $\mathbf{c} - \mathbf{d}_n < \mathbf{c} - \mathbf{d}_{n+1}$. In other words, $\mathbf{c} \wedge (-\mathbf{d}_{n+1}) \wedge \mathbf{d}_n$ is nonzero. Thus there must be some $s_* \in [\lambda]^n$ with $\mathbf{c} \wedge (-\mathbf{d}_{n+1}) \wedge \mathbf{b}_{s_*}$ nonzero. Let $\mathbf{c}' = \mathbf{c} \wedge (-\mathbf{d}_{n+1}) \wedge \mathbf{b}_{s_*}$. Then we claim that $\mathbf{c}'$ decides each $\mathbf{b}_{\{\alpha\}}$. Indeed, for $\alpha \in s$ clearly $\mathbf{c}' \leq \mathbf{b}_{\{\alpha\}}$. On the other hand suppose $\alpha \not \in s$; then $\mathbf{c}' \wedge \mathbf{b}_{\{\alpha\}} = \mathbf{c}' \wedge \mathbf{b}_{s_* \cup \{\alpha\}} \leq \mathbf{c}' \wedge \mathbf{d}_{n+1} = 0$.

Thus we can choose a maximal antichain $C$ from $\mathcal{B}$ such that for every $\mathbf{c} \in C$, $\mathbf{c}$ decides every $\mathbf{b}_{\{\alpha\}}$. So each $\mathbf{b}_{\{\alpha\}}$ is supported on $C$, so we can choose $\mathbf{c}_\alpha \in C$ such that $\mathbf{c}_\alpha \leq \mathbf{b}_{\{\alpha\}}$. Then clearly $\alpha \mapsto \mathbf{c}_\alpha$ is a finite-to-one map from $\lambda$ to $C$, so $|C| \geq \lambda$, contradicting that $\mathcal{B}$ has the $\lambda$-c.c.
\end{proof}

Putting the previous theorems together we get:

\begin{theorem}
Suppose $\aleph_0 < \sigma \leq \mu = \mu^{<\sigma} \leq \lambda$ with $\sigma$ regular. Then there is an ultrafilter $\mathcal{U}$ on $\mathcal{B}_{2^\lambda, \mu, \sigma}$ such that for every countable complete theory $T$, if $T$ has $(\lambda, \mu, \sigma)$-type amalgamation and is low, then $\mathcal{U}$ is $(\lambda, \mathcal{B}, T)$-moral, but if $T$ is nonlow then $\mathcal{U}$ is not $(\mu^{++}, \mathcal{B}, T)$-moral. In particular, if $\lambda =\mu^+$, then $\mathcal{U}$ is $(\lambda, \mathcal{B}, T)$-moral iff $T$ is low.

Thus if $T$ is low and $T' \trianglelefteq T$ then $T'$ is low.
\end{theorem}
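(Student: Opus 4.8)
The plan is to assemble the pieces already in hand. Fix $\sigma,\mu,\lambda$ as hypothesized and set $\mathcal{B} = \mathcal{B}_{2^\lambda,\mu,\sigma}$. First I would fix the ultrafilter: let $\mathcal{U}$ be a $\lambda$-perfect, $\aleph_1$-incomplete ultrafilter on $\mathcal{B}$, whose existence is exactly what the constructions of perfect ultrafilters in \cite{Optimals} and \cite{InfManyClass} supply (those constructions build the ultrafilter over a regular filter, so it is $\aleph_1$-incomplete). If $T$ has $(\lambda,\mu,\sigma)$-type amalgamation and is low, then Theorem~\ref{SatArg} says directly that $\mathcal{U}$ is $(\lambda,\mathcal{B},T)$-moral. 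If instead $T$ is nonlow, then since $\mathcal{B}_{2^\lambda,\mu,\sigma}$ has the $\mu^+$-c.c.\ it a fortiori has the $\mu^{++}$-c.c., so the preceding theorem (a complete Boolean algebra with the $\kappa$-c.c.\ carrying an $\aleph_1$-incomplete ultrafilter is not $(\kappa,\mathcal{B},T)$-moral for any unsimple or nonlow $T$), applied with $\mu^{++}$ in the role of the chain-condition parameter, shows $\mathcal{U}$ is not $(\mu^{++},\mathcal{B},T)$-moral.

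For the ``in particular'' clause, specialize to $\lambda = \mu^+$. If $T$ is low it is simple, so Theorem~\ref{SimpleIsExplicit} gives that $T$ has $(\mu^+,\mu,\sigma) = (\lambda,\mu,\sigma)$-type amalgamation, and hence $\mathcal{U}$ is $(\lambda,\mathcal{B},T)$-moral by the previous paragraph. Conversely, if $T$ is nonlow, then because $\lambda = \mu^+$ the algebra $\mathcal{B}$ has the $\lambda$-c.c., so the same chain-condition theorem applied directly (with parameter $\lambda$) shows $\mathcal{U}$ is not $(\lambda,\mathcal{B},T)$-moral. Thus, when $\lambda = \mu^+$, $\mathcal{U}$ is $(\lambda,\mathcal{B},T)$-moral if and only if $T$ is low.

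Finally, to see that lowness is $\trianglelefteq$-downward closed I argue the contrapositive: assuming $T$ is low and $T'$ is not low, I will produce a regular ultrafilter that $\lambda^+$-saturates $T$ but not $T'$, which witnesses $T' \not\trianglelefteq T$. Fix $\sigma = \aleph_1$ and a cardinal $\mu$ with $\mu = \mu^{\aleph_0}$ (for instance $\mu = 2^{\aleph_0}$, which satisfies $\aleph_0 < \aleph_1 \le \mu = \mu^{<\aleph_1}$), and put $\lambda = \mu^+$, $\mathcal{B} = \mathcal{B}_{2^\lambda,\mu,\aleph_1}$, with $\mathcal{U}$ as fixed above. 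By the ``in particular'' clause, $\mathcal{U}$ is $(\lambda,\mathcal{B},T)$-moral and is not $(\lambda,\mathcal{B},T')$-moral. Since $\mathcal{B}$ has the $\mu^+$-c.c., Theorem~\ref{ExistenceTheorem} provides a regular good filter $\mathcal{D}$ on $\mathcal{P}(\lambda)$ (recall $\lambda = \mu^+$) and an isomorphism $\mathbf{j}\colon \mathcal{P}(\lambda)/\mathcal{D} \cong \mathcal{B}$; set $\mathcal{U}_* = \mathbf{j}^{-1}(\mathcal{U})$, which is a regular ultrafilter on $\mathcal{P}(\lambda)$ because it extends the regular filter $\mathcal{D}$. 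By Theorem~\ref{SeparationOfVariablesTheorem}, for every complete theory $S$ we have that $\mathcal{U}$ is $(\lambda,\mathcal{B},S)$-moral if and only if $\mathcal{U}_*$ $\lambda^+$-saturates $S$; applying this to $T$ and to $T'$ shows $\mathcal{U}_*$ $\lambda^+$-saturates $T$ but not $T'$. Hence $T' \not\trianglelefteq T$ by the definition of Keisler's order, which is the required contrapositive.

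I do not expect a genuine obstacle here, since the substantive work has already been carried out in Theorem~\ref{SatArg} (the saturation direction) and in the chain-condition theorem (the non-morality direction); what remains is bookkeeping. The two points needing care are: first, that the non-morality direction only yields failure of $(\mu^{++},\mathcal{B},T)$-morality in general, because the chain condition of $\mathcal{B}_{2^\lambda,\mu,\sigma}$ is $\mu^+$ rather than $\lambda$ when $\lambda > \mu^+$, which is precisely why the clean biconditional (and hence the Keisler's-order consequence) is stated and then invoked in the regime $\lambda = \mu^+$; and second, that the $\lambda$-perfect ultrafilter one uses must also be taken $\aleph_1$-incomplete, which is inherent in the cited constructions but should be noted explicitly.
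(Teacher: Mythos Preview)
Your proposal is correct and follows essentially the same approach as the paper: take $\mathcal{U}$ to be $\lambda$-perfect and $\aleph_1$-incomplete, then invoke Theorem~\ref{SatArg} for the morality direction and the chain-condition theorem for the non-morality direction, specializing to $\lambda=\mu^+$ via Theorem~\ref{SimpleIsExplicit} for the biconditional. The paper's proof is in fact just two sentences (``Let $\mathcal{U}$ be $\lambda$-perfect and $\aleph_1$-incomplete. Then this works by the preceding theorems''), leaving implicit exactly the bookkeeping you spell out, including the transfer to a regular ultrafilter on $\mathcal{P}(\lambda)$ via the Existence Theorem and Separation of Variables for the final Keisler's-order consequence.
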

\begin{proof}
Let $\mathcal{U}$ be $\lambda$-perfect and $\aleph_1$-incomplete. Then this works by the preceding theorems.

To see that such cardinals $(\lambda, \mu, \sigma)$ exist, let $\sigma = \aleph_1$ and $\mu = 2^{\aleph_0}$ and $\lambda = \mu^+$, say.
\end{proof}

\section{A Minimal Nonlow Theory}\label{MinNonLowSec}

In this section, we show there is a minimal nonlow theory $T_{cas}$. This result is new, although the theory is not new, rather it is due to Casanovas \cite{Casanovas} and was in fact the first example of a simple nonlow theory. The language $\mathcal{L}_{cas}$ is $(R, P, I, I_n: n < \omega)$, where $P, I, I_n$ are each unary relation symbols and $R$ is binary. (Casanovas requires $n \geq 1$ but allowing $n = 0$ is harmless.) We adopt the convention that $a, a',\ldots$ are elements of $P$, $b, b', \ldots$ are elements of $I$.

Let $T_{cas}$ be the theory axiomatized by the following.

\begin{enumerate}
\item The universe is the disjoint union of $P$ and $I$, both infinite;
\item Each $I_n \subseteq I$, and the $I_n$'s are infinite and disjoint;
\item $R \subseteq P \times I$;
\item For each $a \in P$ and for each $n < \omega$, there are exactly $n$ elements $b \in I_n$ such that $R(a,b)$;
\item Whenever $B_0, B_1$ are finite disjoint subsets of $I$ such that each $|B_1 \cap I_n| \leq n$, there is $a \in P$ such that $R(a,b)$ for all $b \in B_1$ and $\lnot R(a, b)$ for all $b \in B_0$.
\item For all $A_0, A_1$ finite disjoint subsets of $P$, there is $b \in I$ such that $R(a, b)$ for all $a \in A_1$ and $\lnot R(a, b)$ for all $a  \in A_0$.
\end{enumerate}

In \cite{Casanovas} it is shown that $T_{cas}$ is complete, and is the model companion of the theory axiomatized by the first four items above. In particular, it is shown that $T_{cas}$ has quantifier elimination in an expanded language, where we add predicates $S_{\ldots}$ that express the following: given $A_0, A_1 \subset P$ finite disjoint with $A_0 \not= \emptyset$, how many $b \in I_n$ are there such that $R(a, b)$ for all $a \in A_1$ and $\lnot R(a, b)$ for all $a \in A_0$. Thus the algebraic closure of a set $X$ is $X \cup \bigcup\{b \in \bigcup_n I_n: \mbox{ there is } a \in X \cap P \mbox{ such that } R(a, b)\}$, and every formula over a set $X$ is equivalent to a quantifier-free formula over $\mbox{acl}(X)$.

Casanovas also shows that $T_{cas}$ is simple with the following forking relation: $X \forkindep_{Z} Y$ iff $\mbox{acl}(X) \cap \mbox{acl}(Y) \subseteq \mbox{acl}(Z)$. Also the formula $R(x, y)$ witnesses that $T_{cas}$ is not low.

The following lemma is immediate from the quantifier elimination in the expanded language discussed above:

\begin{lemma}\label{TCasLemma1}
Let $M \models T_{cas}$. As notation let $I_\omega$ denote $I \backslash \bigcup_n I_n$.
\begin{itemize} 
\item For each $n < \omega$, there is a unique nonalgebraic type $p(x)$ over $M$ with $I_{n}(x) \in p(x)$. It is isolated by the formulas $I_n(x)$ together with $\lnot R(a, x)$ for each $a \in P^M$.
\item For each $A \subseteq P^M$ let $p_A(x)$ be the type over $M$ that says $I_\omega(x)$ holds, $x \not= b$ for each $b \in I^M$, and finally for each $a \in P^M$, $R(a, x)$ holds iff $a \in A$. Then $p_A(x)$ generates a complete type over $M$ that does not fork over $\emptyset$. Moreover, all nonalgebraic complete types over $M$ extending $\{I(x)\} \cup \bigcup_n\{\lnot I_n(x)\}$ are of this form. 
\item Suppose $B \subseteq I^{M}$ is such that each $|B \cap I^{M}_n| \leq n$. Let $p_B(x)$ be the type over $M$ that says $P(x)$ holds, and $x \not= a$ for each $a \in P^M$, and for each $b \in I^{M}$, $R(x, b)$ holds iff $b \in B$. Then $p_B(x)$ generates a complete type over $M$, and moreover every complete nonalgebraic type over $M$ extending $P(x)$ is of this form. Further, given $M_0 \subseteq M$, we have that $p(x)$ does not fork over $M_0$ iff for each $n < \omega$, $B \cap I_n^{M_0} = B \cap I_n^M$.
\end{itemize}
\end{lemma}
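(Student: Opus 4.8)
The plan is to read all three items off the quantifier elimination for $T_{cas}$ in the expanded language, together with the explicit descriptions of $\acl$ and of forking ($X\forkindep_Z Y$ iff $\acl(X)\cap\acl(Y)\subseteq\acl(Z)$) recalled above. In each case the routine is the same: take $c$ realizing the displayed partial type over $M$ inside $\mathfrak{C}\succeq M$, compute $\acl(Mc)$ explicitly, check that the quantifier-free type of $c$ over $M$ — hence, by QE, the full type $\tp(c/M)$ — is pinned down by the listed formulas (which simultaneously gives completeness and the claimed classification of such types), and then evaluate the $\acl$-intersection to decide (non)forking. Consistency of each partial type is a straightforward compactness argument from the amalgamation axioms (items 5 and 6), using in item 2 that $I_\omega$ is infinite, which follows from $T_{cas}$ being the model companion of its first four axioms.

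For item 1, a nonalgebraic $c$ with $I_n(c)$ must satisfy $\lnot R(a,c)$ for every $a\in P^M$: by the fourth axiom the $n$ elements of $I_n$ that are $R$-related to a fixed $a$ all lie in $M$ and remain so in $\mathfrak{C}$, so a new element of $I_n$ cannot be among them. The remaining atomic facts about $c$ over $M$ ($\lnot P(c)$, $\lnot I_m(c)$ for $m\ne n$, $\lnot R(c,b)$ for $b\in I^M$ since $R\subseteq P\times I$, and $c\ne m$) are forced too — and no instance of a counting predicate $S_{\dots}$ involves $c$, since those are relations on $P$ — so QE gives uniqueness; for $n\ge 1$ the fifth axiom even forces $x\ne m$ for $m\in I_n^M$, so $\{I_n(x)\}\cup\{\lnot R(a,x):a\in P^M\}$ is already complete (for $n=0$ one adds $x\ne m$ for $m\in I_0^M$). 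For item 2, if $c\models p_A$ then $c\in I_\omega$, so $\acl(Mc)=Mc$ and $\tp(c/M)$ is determined by $A=\{a\in P^M:R(a,c)\}$; this also shows every nonalgebraic type over $M$ extending $\{I(x)\}\cup\bigcup_n\{\lnot I_n(x)\}$ is some $p_A$. Finally $\acl(c)=\{c\}$ (no $I_n$-element is $R$-related to $c\in I$), so $\acl(c)\cap\acl(M)=\emptyset\subseteq\acl(\emptyset)$ and $\tp(c/M)$ does not fork over $\emptyset$.

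For item 3, if $c\models p_B$ then $c\in P$ and $\acl(Mc)=Mc\cup\{b\in\bigcup_n I_n:R(c,b)\}$, where the $R$-neighbours of $c$ in $I_n$ are $B\cap I_n^M$ together with $n-|B\cap I_n^M|$ fresh points; QE then yields completeness of $p_B$ and that every nonalgebraic type over $M$ containing $P(x)$ is some $p_B$. For $M_0\preceq M$ (the general subset case reducing to this, as forking over $M_0$ is forking over $\acl(M_0)$), the criterion reads: $\tp(c/M)$ does not fork over $M_0$ iff $\acl(cM_0)\cap M\subseteq\acl(M_0)=M_0$, i.e.\ iff $\{b\in\bigcup_n I_n^M:R(c,b)\}=\bigcup_n(B\cap I_n^M)$ is contained in $M_0$, i.e.\ iff $B\cap I_n^M=B\cap I_n^{M_0}$ for every $n$.

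The one point that needs care is the completeness half via QE, and in particular in item 3 that the values of the expanded-language counting predicates $S_{\dots}$ at tuples involving the new point $c$ are decided by $p_B(x)$. This comes down to the observation that the fourth axiom fixes $|\{b\in I_n:R(c,b)\}|=n$ regardless of which $n$-subset of $I_n$ it is, so once the pattern of $c$ against $M$ is prescribed all the relevant counts are determined; everything else is bookkeeping with the explicit $\acl$ and the stated forking relation.
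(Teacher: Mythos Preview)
Your proposal is correct and follows exactly the approach the paper indicates: the paper's entire proof is the single sentence ``immediate from the quantifier elimination in the expanded language discussed above,'' and what you have written is a careful unpacking of precisely that, via the explicit description of $\acl$ and the forking criterion $X\forkindep_Z Y \iff \acl(X)\cap\acl(Y)\subseteq\acl(Z)$. One small point worth making explicit in your item~3 argument: the reason the $S$-counts involving $c$ together with parameters $a_i\in P^M$ are determined is that the \emph{new} $R$-neighbours of $c$ in $I_n$ (the $n-|B\cap I_n^M|$ points outside $M$) necessarily satisfy $\lnot R(a,b^*)$ for every $a\in P^M$, by the same observation you used in item~1 (each $a\in P^M$ has all its $I_n$-neighbours inside $M$); once that is said, every mixed count decomposes into a known contribution from $B\cap I_n^M$ and a forced contribution from the new points.
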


From this lemma we get the following characterization of the saturated models of $T_{cas}$.

\begin{lemma}\label{TCasLemma1v2}
 $M \models T_{cas}$ is $\lambda^+$-saturated if and only if the following conditions are all satisfied:

\begin{itemize}
\item[(I)] $|I_\alpha| \geq \lambda^+$ for each $\alpha \leq \omega$;
\item[(II)] For all $B_0, B_1 \subseteq I^M$ disjoint with each $|B_i| \leq \lambda$, and with each $|B_1 \cap I_n| \leq n$, there is $a \in P$ such that $R(a, b)$ for each $b \in B_1$, and $\lnot R(a, b)$ for each $b \in B_0$; and
\item[(III)] For all $A_0, A_1 \subseteq P^M$ disjoint with each $|A_i| \leq \lambda$, there is $b \in I_\omega$ such that $R(a, b)$ for each $a \in A_1$ and $\lnot R(a, b)$ for each $a \in A_0$.
\end{itemize}
\end{lemma}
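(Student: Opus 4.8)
\emph{Overall approach and the forward direction.} The plan is to combine the quantifier elimination for $T_{cas}$ in the expanded language (which reduces everything to quantifier-free types) with Lemma~\ref{TCasLemma1}, which enumerates all nonalgebraic complete types over a model. For the forward direction ($\lambda^+$-saturated $\Rightarrow$ (I)--(III)) each clause is read off from a single small type. For (I): if $|I_\alpha^M| \le \lambda$ for some $\alpha \le \omega$, restrict the nonalgebraic type over $M$ concentrating on $I_\alpha$ (the unique such type for $\alpha < \omega$, and $p_\emptyset$ for $\alpha = \omega$, from Lemma~\ref{TCasLemma1}) to the set $I_\alpha^M$; by saturation it is realized, necessarily by an element of $I_\alpha^M$ distinct from every element of $I_\alpha^M$ --- absurd. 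For (II), given admissible $B_0,B_1$, the type $p_{B_1}(x)$ of Lemma~\ref{TCasLemma1} is complete over $M$ (this uses $|B_1\cap I_n|\le n$); restrict it to $B_0\cup B_1$ and realize it to obtain the desired $a\in P$. For (III) do the same with the type $p_{A_1}(x)$ concentrating on $I_\omega$, restricted to $A_0\cup A_1$.

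\emph{The backward direction: setup.} Assume (I)--(III). Since $M\models T_{cas}$ we have $\acl(A)\subseteq M$ and $|\acl(A)|\le |A|+\aleph_0$ for $A\subseteq M$, so it suffices to realize every nonalgebraic complete $1$-type $p(x)$ over a set $A=\acl(A)\subseteq M$ with $|A|\le\lambda$. As a preliminary I would deduce $|P^M|\ge\lambda^+$ from (I) and (II): build $a_\xi\in P^M$ for $\xi<\lambda^+$ by recursion, at stage $\xi$ picking a fresh $b_\xi\in I_\omega^M$ and, using (II), an $a_\xi\in P^M$ with $R(a_\xi,b_\xi)$ and $\lnot R(a_\xi,b_\eta)$ for all $\eta<\xi$; then $a_\xi\ne a_\eta$ since they disagree on $b_\eta$. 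By quantifier elimination $p$ is determined by its quantifier-free part, and we split into three cases according to the sort of $x$: $I_n(x)$ for some $n<\omega$, $I_\omega(x)$, or $P(x)$.

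\emph{The three cases.} If $I_n(x)\in p$, then since the $I_n$-neighbours of any $a\in P$ lie in $\acl(a)\subseteq A$ while $x\notin A$, $p$ must contain $\lnot R(a,x)$ for every $a\in A\cap P$; hence any $b\in I_n^M\setminus A$ realizes $p$, and such $b$ exists by (I). If $I_\omega(x)\in p$, let $A_1/A_0$ be the $R$-pattern of $p$ on $A\cap P$; the only non-automatic inequality constraints in $p$ are $x\ne b'$ for the at most $\lambda$ many $b'\in A\cap I_\omega$ realizing this same pattern, and we handle them by choosing fresh distinct $a_{b'}\in P^M\setminus A$ (possible by $|P^M|\ge\lambda^+$) and applying (III) to the pattern enlarged so as also to force disagreement with $b'$ on $a_{b'}$, producing some $b^*\in I_\omega^M\setminus A$ realizing $p$. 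If $P(x)\in p$, symmetrically let $B_1/B_0$ be its $R$-pattern on $A\cap I$ (with $|B_1\cap I_n|\le n$ forced by axiom 4), separate $x$ from the at most $\lambda$ elements $a'\in A\cap P$ sharing this pattern using fresh distinct $b_{a'}\in I_\omega^M\setminus A$ (existing by (I), and not disturbing the bounds $|B_1\cap I_n|\le n$ since they lie in $I_\omega$), and apply (II) to the enlarged pattern to obtain $a\in P^M\setminus A$ realizing $p$.

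\emph{The main obstacle.} The real work is entirely in the backward direction, and specifically in the inequality constraints $x\ne c$ for parameters $c\in A$: when $x$ ranges over some $I_n$ these hold automatically by algebraicity, but when $x$ ranges over $P$ or over $I_\omega$ an element of $M$ of the correct sort and $R$-pattern need not differ from the parameters --- which is exactly why one needs both the cardinality computation $|P^M|\ge\lambda^+$ and the device of first enlarging the prescribed $R$-pattern by fresh separating parameters before invoking (II) or (III). A secondary point, to be dispatched in passing, is that one never has to arrange the values of the expanded-language predicates $S_{\ldots}$ separately: these count $I_n$-elements with a prescribed $R$-pattern over finitely many elements of $P$, and their values are forced by axiom 4 together with the chosen $R$-pattern, so the consistency of $p$ already guarantees they come out correctly.
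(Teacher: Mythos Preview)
The paper does not give a proof of this lemma; it simply states it as following from Lemma~\ref{TCasLemma1} (``From this lemma we get the following characterization\ldots''). Your argument correctly supplies the details along exactly the lines the paper intends: use the type classification of Lemma~\ref{TCasLemma1} to see that (I)--(III) are necessary, and for sufficiency split into the three sorts $I_n$, $I_\omega$, $P$ and realize the relevant pattern. The one nontrivial point the paper suppresses---that (II) and (III) only produce an element with a prescribed $R$-pattern, not one avoiding a given set of $\le\lambda$ parameters---you handle correctly by first enlarging the pattern with fresh separating parameters (using $|I_\omega|\ge\lambda^+$ and your derived $|P|\ge\lambda^+$), and your remark that the expanded-language predicates $S_{\ldots}$ take care of themselves is also right, since any $b\in I_n$ in the relevant count either lies in $\acl(A)=A$ or else is $R$-unrelated to every element of $A\cap P$.
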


Before finishing we quote the following fact, which follows from the proof of Lemma 5.3 from \cite{RandGraph} showing that $T_{rg}$ is the minimal unstable theory; here $T_{rg}$ is the theory of the random graph. 

\begin{theorem}\label{RandGraphChar}
Let $\mathcal{U}$ be a regular ultrafilter on $\mathcal{P}(\lambda)$, and let $S$ be any infinite set. Then $\mathcal{U}$ $\lambda^+$-saturates $T_{rg}$ if and only if for any disjoint sets $A, B \subseteq S^\lambda/\mathcal{U}$, there is a sequence $(X_a: a \in A \cup B)$ with each $X_a \in \mathcal{U}$, such that for every $a \in A$ and for every $b \in B$, $X_a \cap X_b \subseteq \{\alpha < \lambda: a[\alpha] \not= b[\alpha]\}$. (Here $\alpha \mapsto a[\alpha]$ is some fixed lifting of $a$ to an element of $S^\lambda$.)
\end{theorem}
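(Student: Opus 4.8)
The plan is to carry out the standard distribution / multiplicative-refinement analysis for $1$-types of $T_{rg}$ and to read off the stated combinatorial condition as the exact criterion for the relevant distributions to have multiplicative refinements in $\mathcal{U}$. Fix an infinite set $S$ and equip it with a random-graph structure $M\models T_{rg}$, with edge relation which I denote $E$. Since $\mathcal{U}$ is regular, whether $\overline{M}:=M^\lambda/\mathcal{U}$ is $\lambda^+$-saturated depends only on $T_{rg}$, and by the usual reduction it is enough to realize every complete $1$-type $p(x)$ over every $C\subseteq\overline{M}$ with $|C|\le\lambda$. By quantifier elimination in $T_{rg}$, such a $p$ is either algebraic, hence realized by an element of $C\subseteq\overline{M}$, or else is determined by the partition $C=A\sqcup B$, where $A=\{c\in C:E(x,c)\in p\}$ and $B=C\setminus A$, together with $x\ne c$ for each $c\in C$ (automatic for $c\in A$ by irreflexivity of $E$).

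Given such a nonalgebraic $p$, I would form the distribution $(\mathbf{a}_s:s\in[C]^{<\aleph_0})$ with $\mathbf{a}_s=\{\alpha<\lambda:M[\alpha]\models\exists x[\bigwedge_{a\in s\cap A}E(x,a[\alpha])\wedge\bigwedge_{b\in s\cap B}(\neg E(x,b[\alpha])\wedge x\ne b[\alpha])]\}$. Using the extension axioms of $T_{rg}$ together with the infinitude of $M$ (to accommodate the finitely many inequalities $x\ne b[\alpha]$), one checks that $M[\alpha]$ satisfies this sentence exactly when $a[\alpha]\ne b[\alpha]$ for all $a\in s\cap A$ and $b\in s\cap B$; hence $\mathbf{a}_s=\bigcap\{Y_{a,b}:a\in s\cap A,\ b\in s\cap B\}$, where $Y_{a,b}:=\{\alpha<\lambda:a[\alpha]\ne b[\alpha]\}$. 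Since $a\ne b$ in $\overline{M}$ whenever $a\in A$ and $b\in B$, each $Y_{a,b}\in\mathcal{U}$, so $(\mathbf{a}_s)$ is a distribution in $\mathcal{U}$, and by $\lambda$-regularity $p$ is realized in $\overline{M}$ if and only if $(\mathbf{a}_s)$ has a multiplicative refinement in $\mathcal{U}$.

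Finally I would record the two-way correspondence between such refinements and the desired families. From a multiplicative refinement $(\mathbf{b}_s)$ of $(\mathbf{a}_s)$ in $\mathcal{U}$ one gets $X_c:=\mathbf{b}_{\{c\}}\in\mathcal{U}$ with $X_a\cap X_b=\mathbf{b}_{\{a,b\}}\subseteq\mathbf{a}_{\{a,b\}}=Y_{a,b}$; conversely, from $(X_c:c\in A\cup B)$ in $\mathcal{U}$ with $X_a\cap X_b\subseteq Y_{a,b}$ for all cross-pairs, the sets $\mathbf{b}_s:=\bigcap_{c\in s}X_c$ form a multiplicative refinement of $(\mathbf{a}_s)$ in $\mathcal{U}$. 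Combining these observations: $\mathcal{U}$ $\lambda^+$-saturates $T_{rg}$ if and only if for every partition into $A\sqcup B$ of a subset of $\overline{M}$ of size $\le\lambda$ the family $(X_c)$ exists, which is the asserted equivalence — the nontrivial content lying in the case $|A\cup B|\le\lambda$, which is all that is used (and all that the proof of Lemma~5.3 of \cite{RandGraph} requires).

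The main obstacle I expect is the distribution computation itself: verifying carefully that over $M[\alpha]$ the displayed $\exists x$-sentence is equivalent to the finitely many inequalities $a[\alpha]\ne b[\alpha]$ for the cross-pairs. This needs the fact that in any infinite model of $T_{rg}$ the set of witnesses realizing a consistent finite adjacency specification is itself infinite, so that one may choose a witness avoiding the finitely many forbidden values $b[\alpha]$. A secondary point is the routine bookkeeping that reduces $\lambda^+$-saturation to realizing $1$-types over subsets of $\overline{M}$ of size $\le\lambda$, together with the two invocations of regularity of $\mathcal{U}$: that $\lambda^+$-saturation of $M^\lambda/\mathcal{U}$ depends only on $T_{rg}$, and that realizability of $p$ is equivalent to the existence of a multiplicative refinement of $(\mathbf{a}_s)$ in $\mathcal{U}$.
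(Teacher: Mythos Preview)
Your argument is correct and is the natural unpacking of the cited result. Note, however, that the paper does not give its own proof of this statement: it is introduced with ``we quote the following fact, which follows from the proof of Lemma~5.3 from \cite{RandGraph},'' and no argument is supplied. So there is nothing in the paper to compare your proof against; your sketch is essentially what one would extract from the Malliaris reference. One minor point worth flagging: as you yourself observe, the theorem as stated places no size restriction on $A,B$, whereas your argument (and the applications in Section~\ref{MinNonLowSec}) only address and only need the case $|A\cup B|\le\lambda$; this is a harmless looseness in the paper's statement rather than a gap in your reasoning.
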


\begin{theorem}\label{MinNonLow}
Suppose $\mathcal{U}$ is a regular ultrafilter on $\mathcal{P}(\lambda)$. Then $\mathcal{U}$ $\lambda^+$-saturates $T_{cas}$ if and only if $\mathcal{U}$ $\lambda^+$-saturates $T_{rg}$ and $\mathcal{U}$ is $\lambda$-OK. Hence $T_{cas}$ is the minimal nonlow theory in Keisler's order.
\end{theorem}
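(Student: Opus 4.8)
The plan is to reduce the first assertion to the combinatorics of ultrapowers via Lemmas~\ref{TCasLemma1} and~\ref{TCasLemma1v2}, and then read off minimality. Fix $M_0\models T_{cas}$ and set $M:=M_0^\lambda/\mathcal{U}$; by definition $\mathcal{U}$ $\lambda^+$-saturates $T_{cas}$ precisely when $M$ is $\lambda^+$-saturated, so by Lemma~\ref{TCasLemma1v2} it suffices to establish: (a) condition~(I) of that lemma holds automatically; (b) condition~(III) holds if and only if $\mathcal{U}$ $\lambda^+$-saturates $T_{rg}$; and (c) condition~(II) holds if and only if $\mathcal{U}$ is $\lambda$-OK. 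Given (a)--(c), the equivalence follows, since the forward direction of~(c) will not use~(III) while its reverse direction will be allowed to assume~(III) (equivalently, $T_{rg}$-saturation), which is no loss.

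For (a): any ultrapower of an infinite set by a regular ultrafilter on $\mathcal{P}(\lambda)$ has cardinality $\geq\lambda^+$, so $|I_n^M|\geq\lambda^+$ for $n<\omega$, and $I_\omega^M$ is also large since $I^M$ already contains $\geq\lambda^+$ elements $b$ for which $b[i]$ has diverging $I_n^{M_0}$-level. For (b): an instance of~(III) asks to realize in $I_\omega^M$ a type over $\leq\lambda$ parameters from $P^M$, so by the multiplicative-refinement criterion recalled in Section~\ref{SetupSection} (\cite{ConsUltrPOV}, Observation~1.8) it is realized iff its distribution $(\mathbf{a}_s)$ has a multiplicative refinement in $\mathcal{U}$; using axioms~5 and~6 of $T_{cas}$ and {\L}o\'{s}'s theorem one checks that $\mathbf{a}_s$ is, modulo $\mathcal{U}$, exactly $\{i: a[i]\neq a'[i]\text{ whenever }a\in s\cap A_1,\ a'\in s\cap A_0\}$ (coincidences among same-side parameters being harmless), so that~(III) becomes verbatim the condition of Theorem~\ref{RandGraphChar} with $S=P^{M_0}$, i.e.\ is equivalent to $\mathcal{U}$ $\lambda^+$-saturating $T_{rg}$. (Alternatively, for the forward half of~(b) one may simply use that $R(x,y)$ has the independence property, so $T_{rg}\trianglelefteq T_{cas}$.)

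The core is (c). The relevant feature of $T_{cas}$ is that a single point of $P$ is $R$-joined to \emph{exactly} $n$ points of each $I_n$, so that at an index $i$ the formula $\bigwedge_{b\in s}R(x,b[i])$ is consistent precisely when, for each $n$, at most $n$ of the $b[i]$ lie in $I_n^{M_0}$ --- exactly the ``staircase'' that the defining property of a $\lambda$-OK ultrafilter is designed to fill. For ``(II) $\Rightarrow$ $\lambda$-OK'': given a descending $(\mathbf{a}_n)\subseteq\mathcal{U}$ with $\mathbf{a}_0=1$, put $f(i)=\sup\{n:i\in\mathbf{a}_n\}$ (a short argument, splitting on whether $\bigcap_n\mathbf{a}_n\in\mathcal{U}$, lets us take $f$ to be $\omega$-valued), so $\{i:f(i)\geq n\}=\mathbf{a}_n$ and $f$ diverges mod $\mathcal{U}$; choose pairwise-distinct $b_\alpha\in I_\omega^M$ $(\alpha<\lambda)$ with $b_\alpha[i]\in I_{f(i)}^{M_0}$, using a regularizing family $(E_\alpha)$ of $\mathcal{U}$ to make $\alpha\mapsto b_\alpha[i]$ injective on the finite set $\{\alpha:i\in E_\alpha\}$. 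Since the $b_\alpha$ lie in $I_\omega^M$ the level constraint of~(II) is vacuous, so~(II) supplies $a\in P^M$ with $R(a,b_\alpha)$ for all $\alpha$; the resulting multiplicative refinement of the distribution $\{i:\exists a\,\bigwedge_{\alpha\in s}R(a,b_\alpha[i])\}$, intersected with the $E_\alpha$, is a multiplicative refinement of $(\mathbf{a}_{|s|})$, witnessing $\lambda$-OK. For the reverse, ``$\mathcal{U}$ $\lambda$-OK and~(III) $\Rightarrow$ (II)'': given $(B_0,B_1)$, split $B_1=B_1^{\mathrm{fin}}\sqcup B_1^\omega$ into the (countable) finitely-many-per-level part and $B_1\cap I_\omega^M$; the relevant distribution reduces, via axiom~5, to the requirement that at most $n$ of the $b[i]$ with $b\in s\cap B_1$ lie in $I_n^{M_0}$; the contributions of $B_0$ and of $B_1^{\mathrm{fin}}$ are absorbed using the saturation of $T_{rg}$ (equivalently~(III), now available) and the abundance of $P$-elements, while the genuinely unbounded part from $B_1^\omega$ is an instance of the $\lambda$-OK pattern and is handled by the hypothesis. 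Correctly bookkeeping this interaction between the bounded ``random-graph'' conditions and the unbounded ``OK'' condition --- and arranging the needed pointwise-distinctness via regularity --- is the step I expect to be the main obstacle.

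Finally, for minimality: $T_{cas}$ is nonlow (the formula $R(x,y)$ witnesses this, by Lemma~\ref{TCasLemma1}), so it remains to show $T_{cas}\trianglelefteq T'$ for every nonlow $T'$. Let $\mathcal{U}$ be a regular ultrafilter on some $\mathcal{P}(\lambda)$ that $\lambda^+$-saturates $T'$. Then $\mathcal{U}$ is $\lambda$-OK by the theorem of Malliaris and Shelah quoted above (Conclusion~12.16 of~\cite{pEqualsTref}); and since every stable theory is low (indiscernible sequences in a stable theory are indiscernible sets, so dividing is witnessed by a uniformly bounded inconsistency; see~\cite{Buechler}), $T'$ is unstable, whence $T_{rg}\trianglelefteq T'$ because $T_{rg}$ is the minimal unstable theory (\cite{RandGraph}), so $\mathcal{U}$ $\lambda^+$-saturates $T_{rg}$. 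By the equivalence just established, $\mathcal{U}$ $\lambda^+$-saturates $T_{cas}$, as required.
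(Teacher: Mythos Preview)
Your overall plan coincides with the paper's: reduce via Lemma~\ref{TCasLemma1v2} to conditions (I)--(III), dispose of (I) by regularity, link (III) to $T_{rg}$-saturation through Theorem~\ref{RandGraphChar}, and concentrate the work on (II). Your forward half of (c), building the $b_\alpha$'s by hand on level $I_{f(i)}$ and reading off a multiplicative refinement of $(\mathbf{a}_{|s|})$ from a realization, is a pleasant self-contained argument; the paper instead just cites the general fact that saturating any nonlow theory forces $\lambda$-OK (Malliaris, \cite{MalliarisFlex}). So there your route is a genuine, and slightly more informative, alternative.

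The gap is in the reverse half of (c). You assert that ``the genuinely unbounded part from $B_1^\omega$ is an instance of the $\lambda$-OK pattern,'' but this is not directly true and is where the real idea lives. Each $b_\alpha\in B_1\cap I_\omega^M$ carries its own level function $F_\alpha(i)$ (the $n$ with $b_\alpha[i]\in I_n^{M_0}$), so a priori you face $\lambda$ different descending sequences $\{i:F_\alpha(i)\geq n\}$, while $\lambda$-OK only hands you a multiplicative refinement below a \emph{single} descending $(\mathbf{a}_n)$. The paper's device is to first find one nonstandard $F$ with $F\leq_{\mathcal{U}} F_\alpha$ for every $\alpha$, using that $T_{rg}$-saturation forces the lower cofinality of $\omega$ in $\omega^\lambda/\mathcal{U}$ to exceed $\lambda$ (Shelah, \cite{ShelahIso}, Ch.~VI, Theorem~4.8). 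Only then does $C_n:=\{i:F(i)\geq n\}$ give a single sequence to which $\lambda$-OK applies, and the clause $\{i:F(i)\leq F_\alpha(i)\}$ folded into each $A'_{\{\alpha\}}$ is exactly what propagates the resulting bound to all $b_\alpha$ at once. You rightly flag this zone as ``the main obstacle,'' but the specific missing ingredient is this common lower bound $F$, and it is a second, separate use of the $T_{rg}$-saturation hypothesis beyond the one you already spend on separating $B_0$ from $B_1$ via Theorem~\ref{RandGraphChar}.
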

\begin{proof}
If $T$ is any nonlow thoery and $\mathcal{U}$ $\lambda^+$-saturates $T$, then by Lemma 1.21 from \cite{MalliarisFlex} $\mathcal{U}$ is $\lambda$-OK; and since $T$ is unstable, by Lemma 5.3 from \cite{RandGraph}, $\mathcal{U}$ $\lambda^+$-saturates $T_{rg}$. So left to right is clear, as is the ``hence" statement. So to prove the theorem it suffices to show that if $\mathcal{U}$ $\lambda^+$-saturates $T_{rg}$ and $\mathcal{U}$ is $\lambda$-OK, then $\mathcal{U}$ $\lambda^+$-saturates $T_{cas}$. 

Let $M \models T_{cas}$; for convenience we suppose $I_\omega^M = \emptyset$.  Let $\overline{M} = M^I/\mathcal{U}$. Since $\mathcal{U}$ is $\lambda$-regular, clearly for each $\alpha \leq \omega$, $|U_\alpha^{\overline{M}}| \geq \lambda^+$, and since $\mathcal{U}$ additionally $\lambda^+$-saturates $T_{rg}$, by Theorem~\ref{RandGraphChar}, every partial type over $\overline{M}$ as in (III) from the previous lemma is realized. So it suffices to show every partial type over $\overline{M}$ as in (II) from the previous lemma is realized. 

So let $B_0, B_1 \subset I^{\overline{M}}$ be as in (II) from Lemma~\ref{TCasLemma1v2}, and let $p(x)$ be the partial type over $\overline{M}$ saying $x \in P$ and $R(x, b)$ for all $b \in B_1$ and $\lnot R(x, b)$ for all $b \in B_0$. Note that it clearly suffices to consider the case where for each $n < \omega$, $|B_1 \cap I_n^{\overline{M}}| = n$; after arranging this we can also suppose that $B_0 \subseteq I_\omega^{\overline{M}}$ since the other elements add no information. Let $(b_\alpha: \alpha < \lambda)$ enumerate $B_0 \cup B_1$. For each $\beta\leq \omega$ let $\Gamma_\beta = \{\alpha < \lambda: b_\alpha \in I_\beta^{\overline{M}}\}$; so $|\Gamma_n| = n$ for each $n < \omega$. Also, for $j = 0, 1$ let $\Gamma_{\omega, j} = \{\alpha < \lambda: b_\alpha \in B_j \cap I_\omega^{\overline{M}}\}$.

For each $\alpha < \lambda$, let $j_\alpha < 2$ be such that $b_\alpha \in B_{j_\alpha}$. Thus $p(x)$ can be written as $(R(x, b_\alpha)^{j_\alpha}: \alpha < \lambda)$. For each $s \in [\lambda]^{<\aleph_0}$ let $A_s$ be the set of all $i \in I$ such that $M[i] \models \exists x \bigwedge_{\alpha \in I} R(x, b_\alpha[i])^{j_\alpha}$. So each $A_s \in \mathcal{U}$; it suffices to find a multiplicative refinement to $(A_s)$ in $\mathcal{U}$.

For each $\alpha \in \Gamma_{\omega}$ let $F_\alpha: I \to \omega$ be defined by $F_\alpha(i) = $ that $n < \omega$ with $b_\alpha[i] \in I_n^{M[i]}$; so each $F_\alpha$ is a nonstandard element of $(\omega, <)^I/\mathcal{U}$. Since $\mathcal{U}$ $\lambda^+$-saturates $T_{rg}$, we can by Theorem 4.8 from Chapter 6 of \cite{ShelahIso} choose $F: I \to \omega$ $\mathcal{U}$-nonstandard such that $F \leq_{\mathcal{U}} F_\alpha$ for each $\alpha \in \Gamma_{\omega}$. For each $n < \omega$ let $C_n = \{i \in I: F(i) \geq n\} \in \mathcal{U}$. For $s \in [\lambda]^{<\aleph_0}$ let $C_s = C_{|s|}$. Since $\mathcal{U}$ is $\lambda$-OK we can choose a multiplicative refinement $(D_s: s \in [\lambda]^{<\aleph_0})$ of $(C_s)$ in $\mathcal{U}$.

Also, since $\mathcal{U}$ $\lambda^+$-saturates $T_{rg}$, we can by Theorem~\ref{RandGraphChar} choose $(E_\alpha: \alpha \in \Gamma_\omega)$ from $\mathcal{U}$ such that whenever $\alpha_j \in \Gamma_{\omega, j}$ for $j < 2$, and whenever $i \in E_{\alpha_0} \cap E_{\alpha_1}$, then $M[i] \models b_{\alpha_0}[i] \not= b_{\alpha_1}[i]$.

Now given $\alpha < \lambda$, we define $A'_{\{\alpha\}}$ as follows. First, if $\alpha \in \Gamma_n$ then define $A'_{\{\alpha\}} := C_{n+1} \cap \{i \in I: b_\alpha[i] \in I_n^{M[i]}\}$. Second, if $\alpha \in \Gamma_{\omega, 1}$ then let $A'_{\{\alpha\}} := E_\alpha \cap D_{\{\alpha\}} \cap \{i \in I: F(i) \leq F_\alpha(i)\}$. If $\alpha \in \Gamma_{\omega, 0}$ then let $A'_{\{\alpha\}} = E_\alpha \cap \{i \in I: F(i) \leq F_\alpha(i)\}$. For $s \in [\lambda]^{<\aleph_0}$ let $A'_s = \bigcap_{\alpha \in s} A'_{\{\alpha\}}$.

So $(A'_s)$ is a multiplicative distribution in $\mathcal{U}$; it suffices to show it refines $A_s$. So let $s$ be given and let $i \in A'_s$. For each $\beta \leq \omega$ let $s_\beta = s \cap \Gamma_\beta$ and for each $j < 2$ let $s_{\omega, j} = s \cap \Gamma_{\omega, j}$. We want to check that $M[i] \models \exists x \left( \bigwedge_{\alpha \in s \backslash s_{\omega, 0}} R(x, b_\alpha[i]) \, \land \, \bigwedge_{\alpha \in s_{\omega, 0}} \lnot R(x, b_\alpha[i])\right)$.

We have that for each $n < \omega$, $\{b_\alpha[i]: \alpha \in s_n\}  \subseteq I_n^{M[i]}$ has size at most $n$. Let $n_*$ be largest so that $s_{n_*}$ is nonempty. Then $i \in C_{n_*+1}$, so for each $\alpha \in s_{\omega}$, $F_\alpha(i) \geq n_*+1$, that is $b_\alpha[i] \in \bigcup_{n > n_*} I_n^{M[i]}$. Further, if $\alpha_i \in s_{\omega, j}$ for $j < 2$, then by choice of $(E_\alpha: \alpha \in \Gamma_\omega)$ we have that $b_{\alpha_0}[i] \not= b_{\alpha_1}[i]$.

Thus the only issue that can arise is that there is some $n > n_*$ such that $\{\alpha \in s_{\omega, 1}: F_\alpha(i) = n\}$ has more than $n$ elements. In particular this would imply that $s_{\omega, 1}$ has more than $n$ elements, but then $i \in C_{n+1}$ by choice of $(D_s)$, so for each $\alpha \in s_{\omega, 1}$, $F_\alpha(s) \geq n+1$, contradiction.

\end{proof}

\end{document}